\newtheorem{theorem}{Theorem}[section]
\newtheorem{lemma}{Lemma}[section]
\newtheorem{proposition}{Proposition}[section]
\newtheorem{remark}{Remark}[section]
\newtheorem{definition}{Definition}[section]
\newtheorem{corollary}{Corollary}[section]
\newcommand{\e}{\varepsilon}
\newcommand{\wh}{\widehat}
\newcommand{\Id}{{\bf 1}}
\newcommand{\bn}{{\bf n}}
\newcommand{\bm}{{\bf m}}
\newcommand{\bl}{{\bf l}}
\newcommand{\bx}{{\bf x}}
\newcommand{\bk}{{\bf k}}
\begin{document}
\title{Discrete Fourier Restriction associated with Schr\"odinger equations}
\author{Yi Hu}
\address{
Yi Hu\\
Department of Mathematics\\
University of Illinois at Urbana-Champaign\\
Urbana, IL, 61801, USA}
\email{yihu1@illinois.edu}

\author{Xiaochun Li}

\address{
Xiaochun Li\\
Department of Mathematics\\
University of Illinois at Urbana-Champaign\\
Urbana, IL, 61801, USA}

\email{xcli@math.uiuc.edu}

\thanks{ This work was partially supported by an NSF grant DMS-0801154}

\begin{abstract} In this paper, we present a different proof on the discrete Fourier restriction. 
The proof recovers Bourgain's level set result on Strichartz estimates associated with 
Schr\"odinger equations on torus. Some sharp estimates on $L^{\frac{2(d+2)}{d}}$ norm of
certain exponential sums in higher dimensional cases are established. As an application, 
we show that some discrete multilinear maximal functions are bounded on $L^2(\mathbb Z)$.  
\end{abstract} 

\maketitle

\section{Introduction}
\setcounter{equation}0

In this paper, we consider discrete Fourier restriction problems associated with Schr\"odinger 
equations. More precisely, for any given $N\in\mathbb N$, let $S_{d, N}$ stand for the set
$$
 \left\{(n_1,\cdots, n_d)\in \mathbb{Z}^d: |n_j|\leq N,\ 1\leq j\leq d \right\}\,.
$$
For $p>1$, let $A_{p, d, N}$ represent the best constant satisfying 
\begin{equation}\label{1}
\sum_{\mathbf{n}\in S_{d,N}}\left|\widehat{f}(\mathbf{n}, |\mathbf{n}|^2)\right|^2\leq A_{p,d,N}\|f\|_{p'}^2\,,
\end{equation}
where $\mathbf{n}=(n_1, \cdots, n_d)\in S_{d, N}$, $ |\mathbf{n}|=\sqrt{n_1^2+ \cdots +n_d^2}$, $f$ is any $L^{p'}$-function on $\mathbb T^{d+1}$, $\widehat f$ stands for Fourier transform of periodic function $f$ on $\mathbb T^{d+1}$, and $p'= p/(p-1)$. \\

A harmonic analysis method was introduced by Bourgain \cite{B1} to obtain 
\begin{equation}\label{B-est}
A_{p,d,N}\leq C N^{d-\frac{2(d+2)}{p}+\e } \,{\rm for}\,\,\, p> \frac{2(d+4)}{d}\,.
\end{equation}
It was conjectured by Bourgain in \cite{B1} that 
\begin{equation}
A_{p,d,N}\leq
\begin{cases}
C_pN^{d-\frac{2(d+2)}{p}+\e}\quad &\text{for}\ \  p\geq \frac{2(d+2)}{d}\\
C_p                            &\text{for}\ \  2\leq p<\frac{2(d+2)}{d}
\end{cases}.
\end{equation}
The understanding of this conjecture is still incomplete. For instance, the desired upper bounds
for $A_{5, 1, N}$, $A_{3, 2, N}$ or $A_{\frac{2(d+2)}{d}, d, N}$ for $d\geq 3$ are not yet obtained. 
The most crucial estimate established by Bougain in \cite{B1} is certain (sharp) level set estimate.
In this paper we provide a different proof of the level set estimate.  \\

These problems arise from the study of periodic nonlinear Schr\"odinger equations:
\begin{equation}\label{Sch}
\begin{cases}
\Delta_\mathbf{x}u+i\partial_tu+u|u|^{p-2}=0\\
u(\mathbf{x},0)=u_0(\mathbf{x})
\end{cases}.\end{equation}
Here $\mathbf{x}=(x_1, \cdots, x_d)\in\mathbb{T}^d$, and $u(\mathbf{x},t)$ is a function of $d+1$ variables which is periodic in space.  The corresponding Strichartz estimate is the inequality seeking for the best constant $K_{p, d, N}$ 
satisfying 
\begin{equation}\label{Sest}
\left\|\sum_{\mathbf{n}\in S_{d,N}}a_\mathbf{n}e^{2\pi i(\mathbf{n}\cdot\mathbf{x}+|\mathbf{n}|^2t)}\right\|_{L^p(\mathbb{T}^{d+1})}\leq K_{p,d,N}\left(\sum_\mathbf{n}|a_\mathbf{n}|^2\right)^{1/2},\end{equation}
where $\{a_\mathbf{n}\}$ is a sequence of complex numbers.   
The restriction estimate (\ref{1}) is essentially the Strichartz estimate because  
\begin{equation}\label{3}
 K_{p, d, N}\sim \sqrt{A_{p, d, N}}\, 
\end{equation}
follows easily by duality. 
\\

The Duhamel's principle allows us to represent the differential equation as an integral equation
$$u(\mathbf{x},t)=e^{it\Delta}u_0(\mathbf{x})+i\int_{0}^te^{i(t-\tau)\Delta}\left(|u(\mathbf{x},\tau)|^{p-2}u(\mathbf{x},\tau)\right)d\tau\,.$$
Applying Picard's iteration and the Strichartz estimate (\ref{Sest}), Bourgain in \cite{B1} obtained local (global) well-posedness of the Schr\"odinger equations (\ref{Sch}). Hence, the discrete restriction problems are crucial to study the dispersive equations on torus. Moreover, they are closely related to Vinogradov mean value conjecture on exponential sums, which is very interesting and important in additive number theory.  \\

Let us introduce Vinogradov's mean value in order to see more clearly the connection between additive number theory and
discrete Fourier restriction.
For any given  polynomial $P(x, \alpha_1, \cdots, \alpha_d)=\sum_{j=1}^k \alpha_j x^j$ for $\alpha_1, \cdots, \alpha_k\in\mathbb T$, the mean value $J_{k}(N, b)$ is defined by
$$
J_{k}(N,b)=\int_{\mathbb T^k}\left|\sum_{n=1}^Ne^{2\pi i P(n, \alpha_1, \cdots, \alpha_k)} \right|^{2b} d\alpha_1\cdots d\alpha_k\,.
$$
The Vinogradov mean value conjecture asked the following question.
For positive integers $k$ and $b$, is it true that 
\begin{equation}\label{JK}
J_k(N, b)\leq C_{k,b, \varepsilon}(N^{b+\varepsilon} + N^{2b-\frac{k(k+1)}{2}+\varepsilon})\,?
\end{equation} 
Vinogradov invented a method (now called Vinogradov method) to establish some partial results on the 
mean value conjecture, and then utilize these partial results for exponential sums to gain new pointwise estimates,
which can not be done via Weyl's classical squaring method. One of main points in Vinogradov's method is that
pointwise estimates of the exponential sums follow from the suitable upper bound of the mean value.  Despite
many brilliant mathematicians devoted considerable time and energy to this conjecture, only $k=2$ case is completely
settled, and the conjecture is also answered affirmatively for cubic polynomials provided $b >8$ due to Hua's work.\\

In terms of the language of discrete restriction, Vinogradov's mean value conjecture can be rephrased as 
a statement asking whether the following inequality is true:
\begin{equation}\label{restrictionK}
\sum_{n=1}^N \left| \wh f(n, \cdots, n^k) \right|^2\leq C N^{1-\frac{k(k+1)}{p}+\e}  \|f\|^2_{p'}
\end{equation}
for $p\geq k(k+1)$. Of course, (\ref{restrictionK}) is apparently harder. In fact, (\ref{restrictionK}) implies 
the conjecture. But the conjecture only yields some partial results for (\ref{restrictionK}). It will 
be very interesting if the equivalence of (\ref{JK}) and (\ref{restrictionK}) could be established. \\

Despite the  overwhelming difficulty of (\ref{restrictionK}), we pose a relatively simple question here.
Let $k\geq 3$ be a positive integer. Suppose $p\geq 2(k+1)$. Is it true that 
\begin{equation}\label{res-d3}
  \sum_{n=1}^N\left |\wh f(n, n^k)\right|^2\leq  CN^{1-\frac{2(k+1)}{p} +\e } \|f\|_{p'}^2\,? 
\end{equation}
This question is essentially about the Strichartz estimates associated with higher order dispersive equations. Bourgain's proof on (\ref{B-est}) is based on three ingredients: Weyl's sum estimates, Hardy-Littlewood circle method, and Tomas-Stein's restriction theorem. It is difficult to employ Bourgain's method for (\ref{res-d3}). Hence 
we are forced to seek a method, which can be adjusted to handle the higher order polynomials like
$ax+bx^k$.  This is our main motivation. In this paper, we present a different proof of (\ref{B-est}).
This paper is our first paper on the discrete restriction. In the subsequent papers, we will 
modify this method to obtain an affirmative answer to (\ref{res-d3}) for $p$ large enough and then provide 
applications on the corresponding nonlinear dispersive equations.\\

Our first theorem is about weighted 
restriction estimates, which deal with the large $p$ cases of (\ref{1}). Moreover, there is 
no $\e$ required in the upper bound that we obtain.  

\begin{theorem}\label{thm1}
For any $\sigma >0$, any $d\in\mathbb N$, and any $p > \frac{4(d+2)}{d}$, there exists a constant $C$ independent of $N$ such that 
\begin{equation}\label{weight}
\sum_{\bn\in \mathbb Z^d } e^{-\frac{\sigma |\bn|^2}{N^2}} \left| \widehat {f}(\bn\,,  |\bn|^2)\right|^2 \leq C N^{d-\frac{2(d+2)}{p}}\|f\|_{p'}^2\,,
\end{equation}
for all $f\in L^{p'}(\mathbb T^{d+1})$. 
\end{theorem}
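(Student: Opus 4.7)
The strategy is to reduce (\ref{weight}) to a bound on a single convolution kernel via $TT^{\ast}$ duality and then apply Young's convolution inequality. This approach sidesteps the arithmetic counting, and hence the $\varepsilon$-loss, inherent in Bourgain's original method. Concretely, (\ref{weight}) is equivalent by $TT^{\ast}$ to
\[
\|f\ast K_\sigma\|_{L^p(\mathbb T^{d+1})}\;\le\;C\, N^{d-2(d+2)/p}\,\|f\|_{L^{p'}(\mathbb T^{d+1})},\qquad
K_\sigma(\bx,t)=\sum_{\bn\in\mathbb Z^d}e^{-\sigma|\bn|^2/N^2}e^{2\pi i(\bn\cdot\bx+|\bn|^2 t)},
\]
and Young's inequality $\|f\ast K_\sigma\|_p\le\|K_\sigma\|_{p/2}\|f\|_{p'}$ further reduces the matter to proving $\|K_\sigma\|_{L^{p/2}(\mathbb T^{d+1})}\le C\,N^{d-2(d+2)/p}$.

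To handle $K_\sigma$, complete the square in the exponent and apply Poisson summation in $\bn$:
\[
K_\sigma(\bx,t)=C_d\,\alpha^{-d/2}\sum_{\bm\in\mathbb Z^d}e^{-\pi^2|\bx-\bm|^2/\alpha},\qquad \alpha=\sigma/N^2-2\pi i t.
\]
Rescaling $y=N\bx$, $s=N^2 t$ shows that the main ($\bm=0$) term has the form $N^d\,\Phi(y,s)$ with $\Phi(y,s)=(\sigma-2\pi is)^{-d/2}e^{-\pi^2|y|^2/(\sigma-2\pi is)}$, i.e.\ the Schr\"odinger propagator at complex time $\sigma-2\pi is$. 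Thus $K_\sigma$ is a bump of height $\sim N^d$ on scales $(N^{-1},N^{-2})$ around the origin, supplemented by oscillatory periodic tails. A direct change of variables then reduces $\|K_\sigma\|_{p/2}^{p/2}$ to $N^{dp/2-d-2}$ times the $L^{p/2}$ integral of $\Phi$ (plus tails) over the rescaled torus $|y|\le N/2,\;|s|\le N^2/2$. For $p>4(d+2)/d$ this integral is uniformly bounded as $N\to\infty$, yielding $\|K_\sigma\|_{p/2}\lesssim N^{d-2(d+2)/p}$ with the correct exponent and no $N^\varepsilon$ factor.

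The main obstacle will be controlling the periodic tails $\bm\ne 0$ in the Poisson formula, which are non-negligible once $|t|$ is of size comparable to $1$. A naive pointwise bound $|K_\sigma|\lesssim(N/\sqrt\sigma)^d$ on the tail sum is far too crude and would overwhelm the target $N^{d-2(d+2)/p}$ in the $L^{p/2}$ integral; instead, one must exploit Gauss-sum-type cancellation among the $\bm$-summands, which is precisely the phenomenon addressed by Bourgain's level set estimate whose proof this paper recovers. The threshold $p>4(d+2)/d$ then emerges as the exact condition under which the convergent Schr\"odinger-propagator main term and the cancellation-controlled tails combine to give the sharp exponent $d-2(d+2)/p$.
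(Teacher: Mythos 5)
Your reduction is exactly the paper's: writing the left side as $\langle K_\sigma*f,f\rangle$ and applying H\"older--Young to reduce matters to $\|K_\sigma\|_{p/2}\le CN^{d-2(d+2)/p}$ is precisely how Section 2 begins. But the heart of the proof is the kernel bound itself, and there your argument has a genuine gap. Applying Poisson summation globally in $\bn$ (i.e.\ with modulus $q=1$) gives $K_\sigma(\bx,t)=C_d\alpha^{-d/2}\sum_{\bm}e^{-\pi^2|\bx-\bm|^2/\alpha}$ with $\alpha=\sigma/N^2-2\pi i t$, and for $|t|\sim 1$ one has $\mathrm{Re}(1/\alpha)\sim\sigma/N^2$, so on the order of $N^d$ terms have modulus comparable to $1$ while the prefactor $|\alpha|^{-d/2}\sim 1$: the ``periodic tails'' are not a perturbation of a bump at the origin, they \emph{are} the kernel for generic $t$, and no decay is visible in this form. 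You acknowledge this and defer to ``Gauss-sum-type cancellation'' addressed by the level set estimate, but that is not a proof, and it points at the wrong tool: the level set decomposition (Proposition \ref{Prop1}) is the paper's device for the small-$p$ regime and carries an $N^\e$ loss, so it cannot deliver the $\e$-free bound claimed in Theorem \ref{thm1}.

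What the paper actually does, and what is missing from your proposal, is the circle method organized around the rational approximation of $t$: by Dirichlet, $t\in J_{a/q}$ with $|t-a/q|\le 1/(Nq)$; one writes $\bn=q\bk+\bl$, separates the complete Gauss sum $\sum_{\bl\in\mathbb Z_q^d}e^{2\pi i|\bl|^2a/q}e^{2\pi i\bl\cdot\bk/q}$, bounded by $(2q)^{d/2}$, and applies Poisson summation only in $\bk$; this yields $|K_\sigma|\lesssim q^{-d/2}(\sigma^2/N^4+\beta^2)^{-d/4}$ up to a rapidly summable Gaussian factor, whence $\|K_{a/q}\|_r\le CN^{d-(d+2)/r}q^{-(d/2-d/r)}$ (Lemma \ref{lem3}). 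The finite-overlap lemma for the arcs (Lemma \ref{lem2}) then lets one sum $\|K_{a/q}\|_r^r$ over $a\in\mathcal P_q$ and $q\le N$, and it is exactly this summation in $q$ that forces $r=p/2>\frac{2(d+2)}{d}$, i.e.\ $p>\frac{4(d+2)}{d}$ (Lemma \ref{lem4}). Your sketch attributes the threshold to the $\bm=0$ main term near $t=0$ plus unspecified cancellation; in fact the main term alone would allow a smaller exponent, and the threshold emerges from the $q^{-d(1/2-1/r)}$ gain per arc being just summable. Without carrying out this arc decomposition and the Gauss-sum estimate, the proposal does not establish the kernel bound, hence not the theorem.
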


Theorem \ref{thm1} yields (\ref{B-est}) for large $p$ immediately. The proof of Theorem \ref{thm1} presented in
Section \ref{largeP} is very straightforward. The tool we use is Hardy-Littlewood circle method. The decay factor $e^{-\sigma|\bn|^2/N^2}$ makes it possible to calculate $L^p$ norm of the kernel restricted to major arcs or minor arcs. \\

For small $p$ cases, we need a new level set estimate, which implies Bourgain's level set estimate (see Corollary
\ref{cor1}). Its proof relies on a decomposition of the kernel,
which is a sum of a $L^\infty$ function and a function with bounded Fourier transform (see Proposition 
 \ref{Prop1}).

\begin{theorem}\label{thm2}
Suppose that 
$F $ is a periodic function on $\mathbb T^{d+1}$ given by
\begin{equation}
 F(\bx, t) = \sum_{\bn \in S_{d, N}} a_{\bn} e^{2\pi i \bn \cdot \bx} e^{2\pi i |\bn|^2 t}\,,
\end{equation}
where $\{a_\bn\}$ is a sequence with $\sum_\bn |a_\bn|^2 =1$ and $(\bx, t)\in \mathbb T^d\times \mathbb T  $.  For any $\lambda >0$, let
$$ E_\lambda =  \left\{ (\bx, t)\in \mathbb T^{d+1}: |F(\bx, t)|>\lambda\right\}\,. $$
Then for any positive number $Q$ satisfying $ Q\geq N$, 
\begin{equation}\label{levelsetest}
 \lambda^2 \left| E_\lambda\right|^2 \leq C_1Q^{d/2}\left| E_\lambda\right|^2 + \frac{C_2N^\e}{Q}\left| E_\lambda\right|\,
\end{equation}
holds for all $\lambda$.  Here $C_1$ and $C_2$ are constants independent of $N$ and $Q$.  

\end{theorem}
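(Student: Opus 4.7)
The plan is to dualize the level-set inequality, turn it into an estimate for convolution with the extension--restriction kernel on the paraboloid, and then apply the kernel decomposition promised in Proposition~2.1.

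Choose $g$ with $|g|\leq \chi_{E_\lambda}$ and $F\bar g = |F|\chi_{E_\lambda}$, so that $\|g\|_1\leq |E_\lambda|$, $\|g\|_2^2\leq |E_\lambda|$, and $\int F\bar g\,d\bx\,dt = \int_{E_\lambda}|F|\geq \lambda|E_\lambda|$. Expanding $F$ in its Fourier series and applying Parseval turns $\int F\bar g$ into $\sum_\bn a_\bn\overline{\hat g(\bn,|\bn|^2)}$; Cauchy--Schwarz combined with the normalization $\sum_\bn|a_\bn|^2=1$ then yields
\[
\lambda^2 |E_\lambda|^2 \leq \sum_{\bn\in S_{d,N}} |\hat g(\bn,|\bn|^2)|^2 = \int_{\mathbb T^{d+1}} (g*K)(\bx,t)\,\overline{g(\bx,t)}\,d\bx\,dt,
\]
where $K(\bx,t)=\sum_{\bn\in S_{d,N}} e^{2\pi i(\bn\cdot\bx+|\bn|^2 t)}$ is the projection kernel for restriction of the Fourier support to the paraboloid.

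I would then invoke Proposition~2.1 to write $K=K_1+K_2$ with $\|K_1\|_{\infty}\leq C_1 Q^{d/2}$ and $\|\hat K_2\|_{\ell^\infty}\leq C_2 N^\e/Q$. Young's inequality bounds the first term by $\|K_1\|_\infty\|g\|_1^2\leq C_1 Q^{d/2}|E_\lambda|^2$, and Plancherel bounds the second by $\|\hat K_2\|_\infty\|g\|_2^2\leq C_2 N^\e|E_\lambda|/Q$. Summing gives the desired inequality (\ref{levelsetest}).

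The principal obstacle is the kernel decomposition itself. The natural Hardy--Littlewood approach is to partition the $t$-torus at level $Q$ into Farey arcs $\mathfrak M_Q$ around rationals $a/q$ with $q\leq Q$, and to exploit the factorization $K(\bx,t)=\prod_{j=1}^d W(x_j,t)$ with the one-variable Weyl sum $W(x,t)=\sum_{|n|\leq N} e^{2\pi i(nx+n^2 t)}$. Away from the large-scale peaks of $W$, Weyl's inequality gives $|W|\lesssim N^{1+\e}/\sqrt Q$, hence $|K|\lesssim N^{d+\e}/Q^{d/2}\lesssim Q^{d/2+\e}$ using $Q\geq N$, furnishing $K_1$. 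Constructing the complementary piece $K_2$ with $\|\hat K_2\|_\infty\leq N^\e/Q$ is the delicate step: I expect this to require an explicit major-arc expansion of $W$ via Poisson summation, with the gain of $1/Q$ emerging by summing over the $\phi(q)$ Farey fractions at each denominator $q\leq Q$, using the Gauss sum bound $|\sum_{r=0}^{q-1}e^{2\pi i r^2 a/q}|\lesssim\sqrt q$ together with Ramanujan-type cancellation.
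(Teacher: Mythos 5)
Your reduction is exactly the paper's skeleton: the same dual function supported on $E_\lambda$, Cauchy--Schwarz against $\sum_\bn|a_\bn|^2=1$, the identity $\sum_{\bn\in S_{d,N}}|\wh g(\bn,|\bn|^2)|^2=\langle K*g,g\rangle$, and the split $\|K_1\|_\infty\|g\|_1^2+\|\wh{K_2}\|_\infty\|g\|_2^2$. The genuine gap is in the one step you defer, the construction of $K=K_1+K_2$, and the route you sketch for it would fail. If $K_2=K\,\psi$ with $\psi$ any cutoff adapted to the Farey arcs around rationals $a/q$ with $q\le Q$, then for every $\bn\in S_{d,N}$ one has $\wh{K_2}(\bn,|\bn|^2)=\wh\psi(0)=\int\psi$, and the total measure of those arcs (width $\sim 1/(qQ)$, $\phi(q)$ arcs per denominator) is $\sum_{q\le Q}\phi(q)/(qQ)\sim 1$, a constant. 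There is no cancellation at zero frequency offset, so $\|\wh{K_2}\|_\infty\gtrsim 1$, not $O(N^\e/Q)$; the Gauss-sum and Ramanujan cancellation you invoke only helps at the off-paraboloid offsets $k\neq 0$. (A lesser issue: bounding the complementary piece by Weyl's inequality gives $Q^{d/2}N^\e$ rather than the stated clean $C_1Q^{d/2}$.)

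The missing idea is the normalization-and-subtraction of Proposition \ref{Prop1}, together with an inversion of the roles of the two pieces. The paper first dominates the restricted sum by the Gaussian-damped kernel $K_\sigma=\sum_{\bn\in\mathbb Z^d}e^{-\sigma|\bn|^2/N^2}e^{2\pi i(\bn\cdot\bx+|\bn|^2t)}$ (costing only $e^{\sigma d}$), then takes $\Phi(t)=\sum_{Q\le q<2Q}\sum_{a\in\mathcal P_q}\varphi(q^2(t-a/q))$, supported at distance $\sim 1/q^2$ from rationals whose denominators are comparable to $Q$ --- exactly where Poisson summation plus the Gauss-sum bound give the clean pointwise estimate $|K_\sigma|\le C\,Q^{d/2}$. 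Setting $K_{1,Q}=K_\sigma\Phi/\wh\Phi(0)$ and $K_{2,Q}=K_\sigma-K_{1,Q}$, the point is that $\wh{K_\sigma}$ lives on the paraboloid and the cutoff has mean exactly $1$ after normalization, so $\wh{K_{2,Q}}$ vanishes identically on the paraboloid, while off it $\wh{K_{2,Q}}(\bn,n_{d+1})=-e^{-\sigma|\bn|^2/N^2}\wh\Phi(n_{d+1}-|\bn|^2)/\wh\Phi(0)$, which the Ramanujan-sum estimate of Lemma \ref{lem6} bounds by $CN^\e/Q$ for $N\le Q\le N^2$ (the range $Q>N^2$ is trivial since $E_\lambda=\emptyset$ for $\lambda>CN^{d/2}$). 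So the piece with small Fourier transform is not the major-arc part of the kernel (where $K$ is of size $N^d$ near $t=0$), but the complement of a thin shell of arcs at denominator $\sim Q$ after exact removal of the paraboloid coefficients; without this normalization your claimed bound on $\wh{K_2}$ is false at offset $k=0$ and the argument does not close.
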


Applying Theorem \ref{thm2}, we can easily obtain the following corollaries, which were proved by Bourgain in \cite{B1} in a different way. The details will appear in Section \ref{level}.

\begin{corollary}\label{cor1}
If $\lambda \geq C N^{d/4}$ for some suitably large constant $C$, 
then the level set defined in Theorem \ref{thm2} satisfies 
$$
 |E_\lambda|\leq C_1 N^\varepsilon \lambda^{-\frac{2(d+2)}{d}} \,. 
$$
\end{corollary}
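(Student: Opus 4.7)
\medskip

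The plan is to deduce the corollary from Theorem \ref{thm2} simply by optimizing the free parameter $Q$ subject to the admissibility constraint $Q\geq N$. First I would dispense with the trivial case $|E_\lambda|=0$ and, assuming $|E_\lambda|>0$, divide both sides of \eqref{levelsetest} by $|E_\lambda|$ to recast the level set bound as
\begin{equation*}
\bigl(\lambda^2 - C_1 Q^{d/2}\bigr)\, |E_\lambda| \;\leq\; \frac{C_2 N^\varepsilon}{Q}.
\end{equation*}
The goal is then to pick $Q$ as large as possible while still keeping $\lambda^2 - C_1 Q^{d/2}$ comparable to $\lambda^2$, since the resulting bound on $|E_\lambda|$ behaves like $N^\varepsilon/(\lambda^2 Q)$.

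Second, I would choose $Q$ so that $C_1 Q^{d/2} = \lambda^2/2$, that is
\begin{equation*}
Q \;=\; \left(\frac{\lambda^2}{2C_1}\right)^{2/d}.
\end{equation*}
With this choice the parenthesis on the left is exactly $\lambda^2/2$, and we obtain
\begin{equation*}
|E_\lambda| \;\leq\; \frac{2C_2 N^\varepsilon}{\lambda^2 Q} \;=\; 2C_2(2C_1)^{2/d}\, N^\varepsilon\, \lambda^{-2-4/d} \;=\; C\, N^\varepsilon\, \lambda^{-\frac{2(d+2)}{d}},
\end{equation*}
since $2 + 4/d = 2(d+2)/d$.

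The last step is to verify admissibility: Theorem \ref{thm2} requires $Q\geq N$. With the choice above, this amounts to $\lambda^2 \geq 2C_1 N^{d/2}$, i.e.\ $\lambda \geq \sqrt{2C_1}\, N^{d/4}$, which is precisely guaranteed by the hypothesis $\lambda \geq C N^{d/4}$ provided $C\geq\sqrt{2C_1}$. There is really no obstacle here at all: the content of the corollary is entirely in Theorem \ref{thm2}, and the derivation is the routine two-term AM--GM-style optimization that one carries out whenever a level set inequality has a ``main term'' depending polynomially on a parameter and an ``error term'' depending reciprocally on it.
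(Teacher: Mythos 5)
Your proposal is correct and follows essentially the same route as the paper: the paper also chooses $Q^{d/2}=\lambda^2/(2C_1)$ so the first term in \eqref{levelsetest} is absorbed, reads off $|E_\lambda|\leq C N^\varepsilon\lambda^{-2(d+2)/d}$, and notes that $Q\geq N$ forces $\lambda\gtrsim\sqrt{2C_1}\,N^{d/4}$. Nothing further is needed.
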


\begin{corollary}\label{cor2}
\begin{equation}\label{estKpdN2}
K_{p,d, N}\leq C_\varepsilon N^{\frac{d}{2}-\frac{d+2}{p}+\varepsilon}\,\,\,{\rm if} \,\,\, p > \frac{2(d+4)}{d}
\end{equation}
\end{corollary}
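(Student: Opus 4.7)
The plan is to bound $K_{p,d,N}$ by computing the $L^p$ norm of the normalized extension function
\[
F(\mathbf x,t) = \sum_{\mathbf n\in S_{d,N}} a_{\mathbf n}\, e^{2\pi i(\mathbf n\cdot \mathbf x + |\mathbf n|^2 t)}, \qquad \sum_{\mathbf n}|a_{\mathbf n}|^2 = 1,
\]
via the layer-cake formula
\[
\|F\|_p^p = p\int_0^\infty \lambda^{p-1}|E_\lambda|\,d\lambda,
\]
and then integrating the level-set bound of Corollary \ref{cor1}. I would split the integral at the threshold $\lambda_* = CN^{d/4}$ where Corollary \ref{cor1} turns on.

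For the tail $\lambda \geq \lambda_*$, Corollary \ref{cor1} gives $|E_\lambda|\lesssim N^\varepsilon \lambda^{-2(d+2)/d}$. The Fourier coefficients of $F$ are supported in $S_{d,N}$, which has roughly $N^d$ points, so Cauchy--Schwarz yields $\|F\|_\infty \lesssim N^{d/2}$ and I only need to integrate up to $\lambda \sim N^{d/2}$. Since $p > 2(d+4)/d > 2(d+2)/d$, the exponent $p-1-2(d+2)/d$ is strictly greater than $-1$, so the integral is dominated by its upper endpoint:
\[
\int_{\lambda_*}^{N^{d/2}} \lambda^{p-1-2(d+2)/d}\,d\lambda \;\lesssim\; N^{(d/2)(p-2(d+2)/d)} \;=\; N^{pd/2 - (d+2)},
\]
contributing $\lesssim N^{\varepsilon+pd/2-(d+2)}$ to $\|F\|_p^p$, which is precisely the target strength.

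For the head $\lambda < \lambda_*$, I would exploit the Plancherel identity $\|F\|_2 = 1$ together with Chebyshev to get $|E_\lambda| \leq \min(1,\lambda^{-2})$, which produces
\[
\int_0^{\lambda_*} \lambda^{p-1}|E_\lambda|\,d\lambda \;\lesssim\; \lambda_*^{p-2} \;=\; C^{p-2}\, N^{d(p-2)/4}.
\]
A direct comparison of exponents shows that $d(p-2)/4 \leq pd/2 - (d+2)$ is equivalent to $p \geq 2(d+4)/d$, and this is exactly the point at which the hypothesis $p > 2(d+4)/d$ enters: it ensures the head contribution is dominated by the tail contribution. Combining the two pieces gives $\|F\|_p^p \lesssim N^{\varepsilon+pd/2-(d+2)}$, hence $\|F\|_p \lesssim N^{d/2 - (d+2)/p + \varepsilon}$, which is \eqref{estKpdN2}.

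There is no substantive obstacle here, since the argument is a standard distribution-function computation once Corollary \ref{cor1} is available. The only delicate point is the bookkeeping on exponents: the threshold $2(d+4)/d$ is dictated precisely by the balance between the trivial Chebyshev bound $|E_\lambda|\leq \lambda^{-2}$ (valid for all $\lambda$) and the sharper level-set bound $|E_\lambda|\lesssim N^\varepsilon \lambda^{-2(d+2)/d}$ (valid only once $\lambda \gtrsim N^{d/4}$), so any $p$ above this threshold gives the stated estimate with the same $\varepsilon$ loss as in Corollary \ref{cor1}.
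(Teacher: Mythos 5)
Your proof is correct and follows essentially the same route as the paper: a layer-cake decomposition split at $\lambda\sim N^{d/4}$, the trivial Chebyshev bound $|E_\lambda|\leq C\lambda^{-2}$ for small $\lambda$, and Corollary \ref{cor1} for large $\lambda$, with the threshold $p>\frac{2(d+4)}{d}$ arising exactly from balancing the two contributions. Your explicit remark that the tail integral terminates at $\lambda\sim N^{d/2}$ (since $\|F\|_\infty\lesssim N^{d/2}$) makes precise a point the paper leaves implicit, but the argument is otherwise identical.
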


\begin{remark}
Corollary \ref{cor2} clealy yields (\ref{B-est}) because $K_{p,d,N}\sim \sqrt{A_{p,d,N}} $.
Moreover, the tiny positive number $\varepsilon$ in (\ref{estKpdN2}) can be removed. Clearly from Theorem \ref{thm1}, we see immediately that the 
$\varepsilon$ is superfluous for larger $p$. For $\frac{2(d+4)}{d}< p \leq \frac{4(d+2)}{d} $, Bourgain 
in \cite{B1} succeeded in removing the $\varepsilon$ via a delicate interpolation argument.  
At the moment we were writing this paper,  a new paper \cite{B4} posed by Bougain shows that the lower bound of $p$ 
can be improved to be $\frac{2(d+3)}{d}$ by a multi-linear restriction theory. 
\end{remark}

Moreover, Theorem \ref{thm2} implies the following recurrence relation  on $K_{p,d,N}$ in the sense of inequality.

\begin{corollary}\label{cor3}
For $p>2$, we have
\begin{equation}\label{re}
 K_{p, d, N}^p \leq C N^{d}K^{p-2}_{p-2, d, N} + CN^{\frac{dp}{2}-d-2+\e}\,. 
\end{equation}
Here $C$ is independent of $N$.
\end{corollary}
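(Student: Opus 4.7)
My plan is to derive the recurrence by a standard level-set decomposition, using the definition of $K_{p-2,d,N}$ to control the bulk and Corollary \ref{cor1} to control the tail. Fix a sequence $\{a_\bn\}$ with $\sum_\bn|a_\bn|^2=1$, let $F$ be the associated exponential sum on $\mathbb T^{d+1}$, and fix a threshold $\lambda_0=cN^{d/4}$, with $c$ the constant from Corollary \ref{cor1}. Writing $|F|^p=|F|^{p-2}|F|^2$ and splitting according to whether $|F|\leq\lambda_0$ gives
\[
\|F\|_p^p \;\leq\; \lambda_0^2\int_{\mathbb T^{d+1}}|F|^{p-2}\,d\bx\,dt \;+\; p\int_{\lambda_0}^{\infty}\lambda^{p-1}|E_\lambda|\,d\lambda.
\]

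Next I handle the two pieces. The first is controlled by the definition of $K_{p-2,d,N}$: $\int|F|^{p-2}\leq K_{p-2,d,N}^{p-2}$, and $\lambda_0^2\lesssim N^{d/2}\leq N^d$, so this contribution is at most $CN^d\,K_{p-2,d,N}^{p-2}$, matching the first term of the claim. For the tail I use that $\|F\|_\infty\leq(2N+1)^{d/2}$ by Cauchy--Schwarz, so $|E_\lambda|$ vanishes for $\lambda\geq cN^{d/2}$ and the integration really runs over $\lambda_0\leq\lambda\leq cN^{d/2}$. On this range Corollary \ref{cor1} supplies $|E_\lambda|\leq CN^\e\lambda^{-2(d+2)/d}$, so the tail is bounded by $C_pN^\e\int_{\lambda_0}^{cN^{d/2}}\lambda^{p-1-2(d+2)/d}\,d\lambda$. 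For $p>2(d+2)/d$ the integrand has positive exponent and the integral is saturated at its upper endpoint, producing $\lesssim N^{dp/2-d-2+\e}$; for smaller $p$ the lower endpoint dominates and gives an even smaller contribution, still absorbed in the same bound. Taking the supremum over unit $\ell^2$-sequences yields the recurrence.

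The substance is already packaged in Corollary \ref{cor1}, so there is no genuine obstacle -- the argument is essentially bookkeeping. The only points requiring care are pinning $\lambda_0$ exactly at the Corollary \ref{cor1} threshold so that the level-set bound is valid on the entire range of integration, and verifying that for $p>2(d+2)/d$ the tail integral is saturated at its upper endpoint $\sim N^{d/2}$ rather than at $\lambda_0$, which is what produces the target exponent $dp/2-d-2+\e$. I also note that this argument actually delivers the slightly sharper factor $N^{d/2}$ rather than $N^d$ in front of $K_{p-2,d,N}^{p-2}$; the statement opts for the weaker but cleaner $N^d$.
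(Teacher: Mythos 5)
Your route is genuinely different from the paper's. The paper never invokes Corollary \ref{cor1} here: it divides the level set estimate (\ref{levelsetest}) by $|E_\lambda|$, multiplies by $\lambda^{p-3}$, integrates $\lambda$ from $0$ to $CN^{d/2}$ with a \emph{single fixed} $Q=N^2$, and reads off $\|F\|_p^p\leq C_1Q^{d/2}\|F\|_{p-2}^{p-2}+C_2N^{\frac{dp}{2}-d+\e}/Q$; this is shorter and works uniformly in $p>2$ with no case analysis in $\lambda$. You instead split at $\lambda_0\sim N^{d/4}$, bound the bulk by $\lambda_0^2\int|F|^{p-2}\leq CN^{d/2}K_{p-2,d,N}^{p-2}$, and use the already-optimized ($\lambda$-dependent $Q$) bound of Corollary \ref{cor1} on the tail. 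What your version buys is the sharper coefficient $N^{d/2}$ in place of $N^d$; what the paper's version buys is the absence of endpoint bookkeeping. Your splitting inequality is fine (the omitted boundary term $\lambda_0^p|E_{\lambda_0}|$ is $\leq\lambda_0^2\int_{|F|>\lambda_0}|F|^{p-2}$, so it folds into the bulk), and the supercritical tail computation correctly gives $N^{\frac{dp}{2}-d-2+\e}$ (at $p=\frac{2(d+2)}{d}$ one picks up a $\log N$, harmlessly absorbed into $N^\e$).

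One sub-claim is wrong as stated: in the range $2<p<\frac{2(d+2)}{d}$ the tail integral is dominated by its lower endpoint and equals, up to constants, $N^\e\lambda_0^{p-\frac{2(d+2)}{d}}\sim N^{\frac{dp}{4}-\frac{d+2}{2}+\e}$, and since $\frac{dp}{4}-\frac{d+2}{2}>\frac{dp}{2}-d-2$ exactly when $p<\frac{2(d+2)}{d}$, this is \emph{larger} than $N^{\frac{dp}{2}-d-2+\e}$, not ``an even smaller contribution absorbed in the same bound.'' The corollary still follows, because in that range the exponent $\frac{dp}{4}-\frac{d+2}{2}$ is negative, so the tail is $O(N^\e)$ and can be absorbed into the first term $N^dK_{p-2,d,N}^{p-2}$ (note $K_{p-2,d,N}\geq 1$, as seen by testing with a single frequency); but the absorption must go into the first term, not the second, so this one line of your argument needs that patch.
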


These three corollaries will be proved in Section \ref{level}. 
Carrying on the idea used in the proof of Theorem \ref{thm2}, we can get the following theorem.

\begin{theorem}\label{thm3}
Let $N_1, \cdots, N_d\in\mathbb N$ and $S_{N_1,\cdots, N_d}$ be defined by 
\begin{equation}\label{defofS}
S_{N_1, \cdots, N_d}(\bx, t) = \sum_{\bn\in S(N_1, \cdots, N_d)} e^{2\pi i \bn \cdot \bx} e^{2\pi i |\bn|^2 t} \,.
\end{equation}
where $S(N_1, \cdots, N_d)$ is given by
\begin{equation}
S(N_1, \cdots, N_d)=\{\bn=(n_1, \cdots, n_d)\in \mathbb Z^d: |n_j|\leq N_j\,\,\, {\rm for}\,\,\, {\rm all}\,\, j\in\{1, \cdots, d\}  \}\,.
\end{equation}
For any $\varepsilon>0$, there exists a constant $C$ independent of $N$ such that
\begin{equation}\label{estS}
\left\|S_{N_1, \cdots, N_d}\right\|_{\frac{2(d+2)}{d}}\leq 
C \left( N_1\cdots N_d \right)^{\frac{d}{2(d+2)}} \max\{N_1,\cdots, N_d\}^{\frac{d}{d+2} +\e}\,. 
\end{equation}
\end{theorem}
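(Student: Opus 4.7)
The plan is to exploit the coordinate factorization of $S_{N_1,\dots,N_d}$ and apply H\"older's inequality in the time variable, reducing the estimate to an isotropic critical-exponent bound. Because $|\bn|^2=\sum_j n_j^2$ and $\bn\cdot\bx=\sum_j n_j x_j$, the sum factors as $S_{N_1,\dots,N_d}(\bx,t)=\prod_{j=1}^{d}F_{N_j}(x_j,t)$ with $F_N(x,t):=\sum_{|n|\le N} e^{2\pi i(nx+n^2 t)}$. Setting $p=\tfrac{2(d+2)}{d}$, Fubini in $\bx$ gives $\|S_{N_1,\dots,N_d}\|_p^p=\int_{\mathbb T}\prod_{j=1}^{d}\|F_{N_j}(\cdot,t)\|_{L^p(\mathbb T)}^{p}\,dt$, and then H\"older in $t$ with $d$ equal conjugate exponents $r_j=d$ yields
$$\|S_{N_1,\dots,N_d}\|_p^p \le \prod_{j=1}^{d}\Bigl(\int_{\mathbb T}\|F_{N_j}(\cdot,t)\|_{L^p(\mathbb T)}^{pd}\,dt\Bigr)^{1/d}.$$

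The key step is the identification of the inner factor with an isotropic sum. Since $(\int_{\mathbb T}|F_{N_j}(x,t)|^p\,dx)^d=\int_{\mathbb T^d}|\prod_{i=1}^{d}F_{N_j}(x_i,t)|^{p}\,d\bx$ and the product $\prod_{i=1}^{d}F_{N_j}(x_i,t)$ is exactly $S_{N_j,\dots,N_j}(\bx,t)$ over the isotropic cube $S_{d,N_j}$, integrating over $t$ gives $\int_{\mathbb T}\|F_{N_j}(\cdot,t)\|_{L^p}^{pd}\,dt=\|S_{N_j,\dots,N_j}\|_{L^p(\mathbb T^{d+1})}^p$, so that $\|S_{N_1,\dots,N_d}\|_p \le \prod_{j=1}^{d}\|S_{N_j,\dots,N_j}\|_{L^p(\mathbb T^{d+1})}^{1/d}$. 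The main obstacle is now the isotropic critical-exponent estimate $\|S_{N,\dots,N}\|_{2(d+2)/d}\le C N^{d/2+\e}$; Corollary~\ref{cor2} only supplies this for $p>\tfrac{2(d+4)}{d}$, so to reach the endpoint one must refine the layer-cake integration of the level set estimate from Theorem~\ref{thm2}, using that estimate directly for $\lambda\ge CN^{d/4}$ (where it gives $|E_\lambda|\le CN^\e\lambda^{-2(d+2)/d}$) and bootstrapping through an off-critical $L^{p_0}$ input coming from Corollary~\ref{cor2} in the intermediate range $\lambda\in[1,CN^{d/4}]$. This is precisely where ``the idea of Theorem~\ref{thm2}'' is being carried further.

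Substituting the isotropic bound back produces $\|S_{N_1,\dots,N_d}\|_p \le C\bigl(\prod_{j=1}^{d}N_j\bigr)^{1/2+\e}$. To recast this in the stated form, I would split $\tfrac{1}{2}=\tfrac{d}{2(d+2)}+\tfrac{1}{d+2}$ and use $\bigl(\prod_j N_j\bigr)^{1/(d+2)}\le\bigl(\max_j N_j\bigr)^{d/(d+2)}$, which yields
$$\|S_{N_1,\dots,N_d}\|_{2(d+2)/d}\le C\Bigl(\prod_{j=1}^d N_j\Bigr)^{d/(2(d+2))}\bigl(\max_j N_j\bigr)^{d/(d+2)+\e},$$
the claimed estimate.
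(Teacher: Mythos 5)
Your tensor-product reduction is correct and is genuinely different from the paper: $S_{N_1,\dots,N_d}(\bx,t)=\prod_j F_{N_j}(x_j,t)$ does factor, the H\"older step in $t$ is legitimate, the identification $\int_{\mathbb T}\|F_{N_j}(\cdot,t)\|_{L^p(\mathbb T)}^{pd}dt=\|S_{N_j,\dots,N_j}\|_{L^p(\mathbb T^{d+1})}^p$ is right, and granting the isotropic bound $\|S_{N,\dots,N}\|_{2(d+2)/d}\le CN^{d/2+\e}$ your arithmetic recovers (indeed slightly sharpens) the stated estimate. The gap is that this isotropic critical-exponent bound \emph{is} the whole content of the theorem, and the route you sketch for it does not close. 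Theorem~\ref{thm2} and Corollaries~\ref{cor1}--\ref{cor2} are blind to the special coefficients $a_{\bn}\equiv 1$: they only see $\sum_{\bn}|a_{\bn}|^2$, and at the critical exponent the coefficient-independent statement is exactly the open part of Bourgain's conjecture recalled in the introduction (e.g.\ $A_{2(d+2)/d,d,N}$ for $d\ge 3$), so no bootstrapping among them can succeed. Quantitatively: Corollary~\ref{cor1} controls the normalized sum only at heights $\lambda\gtrsim N^{d/4}$ (because Cauchy--Schwarz in the proof of Theorem~\ref{thm2} forces $Q^{d/2}\sim\lambda^2$ with $Q\ge N$), i.e.\ $|S_{N,\dots,N}|\gtrsim N^{3d/4}$. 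In the window $N^{d/2+\e}\lesssim\mu\lesssim N^{3d/4}$, with $p=2(d+2)/d$ and target $\int\mu^{p-1}|\{|S|>\mu\}|\,d\mu\lesssim N^{d+2+\e}$, the $L^2$/Chebyshev bound $|\{|S|>\mu\}|\le CN^d\mu^{-2}$ contributes about $N^{d}(N^{3d/4})^{4/d}=N^{d+3}$, while Chebyshev from an off-critical $L^{p_0}$ with $p_0>\frac{2(d+4)}{d}$ (Corollary~\ref{cor2}, applied with $a_\bn=1$, so $\|S\|_{p_0}\le CN^{d-(d+2)/p_0+\e}$) contributes at least $N^{dp_0-(d+2)}(N^{d/2})^{p-p_0}=N^{dp_0/2}>N^{d+4}$. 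Both overshoot, so the ``intermediate range'' cannot be absorbed this way.

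What the paper does at exactly this point, and what your sketch is missing, is a \emph{new} level-set estimate that exploits $a_{\bn}\equiv 1$. One pairs $\Id_{G_\lambda}S/|S|$ not via Cauchy--Schwarz but against the rectangular Fourier partial sum $f_{N_1,\dots,N_d}$, whose $L^1$ norm is controlled by products of Dirichlet kernels and hence by $(\log)^{O(d)}|G_\lambda|$; and one decomposes $S=S_{1,Q}+S_{2,Q}$ (Lemma~\ref{lemThm3}), where $\|S_{1,Q}\|_\infty\lesssim Q^{d/2}(\log Q)^{d/2}$ comes from Weyl's inequality (Lemma~\ref{Weyl}) on the major-arc support of $\Phi$, and $\|\widehat{S_{2,Q}}\|_\infty\lesssim \max_j N_j^{\e}/Q$ from Lemma~\ref{lem6}. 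Because $|G_\lambda|$ now enters \emph{linearly}, one may choose $Q^{d/2}\sim\lambda$ rather than $\lambda^2$, so the resulting bound (\ref{estofG}) is valid for all $\lambda\gtrsim \max_j N_j^{d/2+\e}$ --- precisely filling the window where your argument fails --- and the trivial $L^2$ bound covers the remaining small levels. If you keep your (nice) reduction to the isotropic case, you must still prove that case by an argument of this kind; the reduction does not let you avoid it.
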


Observe that if $N_1=\cdots=N_d=N$, (\ref{estS}) implies that
\begin{equation}
 \left\| \sum_{\bn\in S_{d, N}}e^{2\pi i \bn \cdot \bx} e^{2\pi i |\bn|^2 t} \right\|_{2(d+2)/d}
\leq N^{\frac{d}{2}+\e}\,,
\end{equation}
that is, 
\begin{equation}
\left\| \sum_{\bn\in S_{d, N}}a_{\bn} e^{2\pi i \bn \cdot \bx} e^{2\pi i |\bn|^2 t} \right\|_{2(d+2)/d}
\leq N^\e \left(\sum_\bn |a_\bn|^2 \right)^{1/2}
\end{equation}
provided  $a_\bn=1 $ for all $\bn$.  If the conditions $a_\bn=1$ for all $\bn$ could be 
removed, then the Bourgain conjecture would be solved for all $p$'s not less than the critical index
$2(d+2)/d$.

Theorem \ref{thm3} has a direct application to some multi-linear maximal functions, related to maximal ergodic theorem, for instance, to pointwise convergence of  
the non-conventional bi-linear average  
\begin{equation*}
N ^{-1} \sum _{n=1} ^{N} f_1 (T ^{n}) f_2 (T ^{n ^2 })\,,
\end{equation*}
where $T$ is a measure preserving transformation on a probability space $(X, \mathcal A, \mu)$.
This application will appear in Section {\ref{appl1}}.\\

\section{Large $p$ Cases}\label{largeP}

In this section we provide a proof of Theorem \ref{thm1}. All we need to employ is the Hardy-Littlewood circle method.
Observe that for large $p$, $A_{p, d, N}\leq C N^{d-\frac{2(d+2)}{p}}$ follows immediately by noticing 
$$
\sum_{\mathbf{n}\in S_{d,N}}\left|\widehat{f}(\mathbf{n}, |\mathbf{n}|^2)\right|^2\leq e^{\sigma d}\sum_{\mathbf{n}\in S_{d,N}}e^{-\frac{\sigma |\mathbf{n}|^2}{N^2}}\left|\widehat{f}(\mathbf{n}, |\mathbf{n}|^2)\right|^2\leq e^{\sigma d}\sum_{\mathbf{n}\in\mathbb{Z}^d}e^{-\frac{\sigma |\mathbf{n}|^2}{N^2}}\left|\widehat{f}(\mathbf{n}, |\mathbf{n}|^2)\right|^2.
$$  
Thus Theorem \ref{thm1} yields the desired upper bounds of $A_{p,d,N}$ for large $p$ cases.
Here the decay factor $e^{-\frac{\sigma |\mathbf{n}|^2}{N^2}}$ will make our calculation much easier. 
The key idea is to decompose the circle into arcs (called major arcs and minor arcs) and then estimate $L^p$ norm 
of the corresponding kernel over each arcs.\\

First we present some technical lemmas. In order to introduce the major arcs, we should state Dirichlet principle. 

\begin{lemma} {\rm({\bf Dirichlet Principle})}
  For any given $N\in \mathbb N$ and any $t\in (0,1]$, there exist $a, q\in\mathbb N$, $1\leq q\leq N$, $1\leq a\leq q$, $(a, q)=1$, such that $\left|t-\frac{a}{q}\right|\leq\frac{1}{Nq}.$
\end{lemma}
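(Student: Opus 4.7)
The plan is to prove Dirichlet's principle by the classical pigeonhole argument applied to fractional parts of the integer multiples of $t$.

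First, I would consider the $N+1$ fractional parts $\{j t\}$ for $j = 0, 1, \ldots, N$, lying in $[0, 1)$, where $\{x\}$ denotes the fractional part of $x$. Partition $[0,1)$ into the $N$ half-open subintervals $I_k = [(k-1)/N, k/N)$ for $k = 1, \ldots, N$. Since there are $N+1$ points distributed among $N$ boxes, the pigeonhole principle guarantees that two of the fractional parts, say $\{j_1 t\}$ and $\{j_2 t\}$ with $0 \leq j_1 < j_2 \leq N$, must fall in the same $I_k$. Hence $|\{j_2 t\} - \{j_1 t\}| < 1/N$, and this produces an integer $p$ such that $|(j_2 - j_1)t - p| < 1/N$. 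Setting $q' = j_2 - j_1$ and $a' = p$, I obtain $1 \leq q' \leq N$ together with $|t - a'/q'| \leq 1/(Nq')$.

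Next I would enforce the two remaining conditions. For coprimality, if $d = \gcd(a', q') > 1$, I replace $(a', q')$ by $(a, q) := (a'/d, q'/d)$; since $q \leq q'$, the bound strengthens to $|t - a/q| \leq 1/(Nq') \leq 1/(Nq)$, and now $\gcd(a, q) = 1$. For the range of $a$, since $t \in (0,1]$ we have $qt \in (0, q]$, and the inequality $|qt - a| \leq 1/N \leq 1$ forces $a \in \{0, 1, \ldots, q\}$. On the torus $\mathbb{T} = \mathbb{R}/\mathbb{Z}$, which is the ambient space throughout the paper, the representatives $a/q = 0$ and $a/q = 1$ coincide, so in the degenerate case $a = 0$ (corresponding to $t \leq 1/(Nq)$) I replace it by the equivalent choice $q = 1$, $a = 1$, placing $a$ in $\{1, \ldots, q\}$.

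There is no substantial obstacle in this proof, as this is the standard Dirichlet box argument. The only subtlety is the bookkeeping of the edge case $a = 0$, which is absorbed by the periodic identification $0 \sim 1$ intrinsic to the torus framework used throughout the paper.
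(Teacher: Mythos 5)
Your pigeonhole argument is correct and is exactly the route the paper indicates: the paper states the lemma without proof, remarking only that it follows from the pigeonhole principle or the Farey dissection of order $N$. Your handling of the degenerate case $a=0$ via the identification $0\sim 1$ is also the right reading, since for $t$ very close to $0$ the inequality with $1\le a\le q$ only holds as a distance mod $1$ on $\mathbb{T}$ — the same convention the paper uses implicitly when the arc around $a/q=1/1$ wraps around the circle.
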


This principle can be proved by utilizing the pigeonhole principle or by the Farey dissection of order $N$.
For any integer $q$, define $\mathcal P_q$ by 
$$
 \mathcal P_q = \{ a\in \mathbb Z:  1\leq a\leq q, (a, q)=1\}\,,
$$
 and for any $a\in \mathcal P_q $, set the interval $J_{a/q}$ by 
$J_{a/q} = (\frac{a}{q}- \frac{1}{Nq}, \frac{a}{q} + \frac{1}{Nq})$.  If $q< N/10$, the interval $J_{a/q}$ 
is called a major arc, otherwise, a minor arc.  Clearly we can partition $(0,1]$ into a union of major arcs and
minor arcs, that is,
$$
 (0,1]=\bigcup_{1\leq q\leq N, a\in \mathcal P_q} J_{a/q} = \mathcal M_1 \cup \mathcal M_2\,.
$$ 
Here $\mathcal M_1$ is the collection of all major arcs and $\mathcal M_2$ is the union of all minor arcs.

\begin{lemma}\label{lem2}
Let ${\bf 1}_A$ denote the indicator function of a measurable set $A$. Then 
\begin{equation}
\left\|\sum_{J\in \mathcal M_1} {\bf 1}_{J} \right\|_\infty +  \left\|\sum_{J\in \mathcal M_2} {\bf 1}_{J} \right\|_\infty  \leq 100 \,.
\end{equation}
\end{lemma}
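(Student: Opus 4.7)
The plan is to establish each $L^\infty$ bound by a pointwise multiplicity argument, showing that the major arcs are essentially disjoint and that the minor arcs overlap boundedly. The main input is the classical Farey-type separation: if $a/q$ and $a'/q'$ are distinct reduced fractions with positive denominators, then $|a/q - a'/q'| \geq 1/(qq')$. Combining this with the observation that two arcs $J_{a/q}$ and $J_{a'/q'}$ overlap only when the distance between their centers is at most $1/(Nq) + 1/(Nq')$, I would derive the necessary condition $q + q' \geq N$ for any pair of distinct arcs sharing a point.

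For the major arcs $\mathcal{M}_1$, both denominators satisfy $q, q' < N/10$, hence $q + q' < N/5 < N$, contradicting the condition above unless $(a,q) = (a',q')$. Therefore distinct major arcs are disjoint and $\bigl\|\sum_{J \in \mathcal{M}_1} \mathbf{1}_J\bigr\|_\infty \leq 1$.

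For the minor arcs $\mathcal{M}_2$, the denominators lie in $[N/10, N]$, so the inequality $q + q' \geq N$ is automatic and gives no restriction. I would instead argue by packing: each minor arc has half-length $1/(Nq) \leq 10/N^2$, so the centers of all minor arcs containing a fixed point $t$ lie in an interval of length $20/N^2$, while any two distinct such centers are separated by at least $1/(qq') \geq 1/N^2$ by the Farey bound. A one-dimensional pigeonhole count then yields at most $21$ centers, so $\bigl\|\sum_{J \in \mathcal{M}_2} \mathbf{1}_J\bigr\|_\infty \leq 21$.

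Adding the two bounds produces $1 + 21 = 22 \leq 100$, which comfortably establishes the lemma. The argument is essentially routine; the one place that requires slight care is the minor arc packing estimate, where the numerics rely on $q \geq N/10$ to match the arc length $\sim 1/N^2$ with the Farey spacing $\sim 1/N^2$. The generous constant $100$ in the statement reflects a convenient uniform bound rather than the sharpest one.
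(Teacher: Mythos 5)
Your proof is correct, and it reaches the stated bound (indeed the sharper bound $1+21=22$) by a slightly different counting mechanism than the paper. Both arguments ultimately rest on the same two facts: distinct reduced fractions with denominators at most $N$ satisfy the Farey separation $|a/q-a'/q'|\geq 1/(qq')\geq 1/N^2$, and minor arcs have $q\geq N/10$, hence half-length $1/(Nq)\leq 10/N^2$. The paper exploits these arithmetically: it fixes one minor arc $J_{a_0/q_0}$, notes that any overlapping minor arc forces $|a_0q-aq_0|<2$, hence $a_0q-aq_0=\pm1$, and then counts solutions of the linear diophantine equation $a_0x-q_0y=\pm1$ with $|x|\leq N$, whose $x$-values form an arithmetic progression of step $q_0\geq N/10$, giving $O(1)$ solutions. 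You instead fix a point $t$ and run a metric packing argument: all centers of minor arcs containing $t$ lie in an interval of length $20/N^2$ and are pairwise $1/N^2$-separated, so there are at most $21$ of them. Your route is a bit more elementary (no parametrization of diophantine solutions), yields a cleaner constant, and has the added merit that the same separation inequality, via the necessary condition $q+q'\geq N$ for overlap, actually proves the disjointness of major arcs, which the paper only asserts as evident. The paper's version, on the other hand, makes explicit the bounded-overlap structure relative to a fixed arc, which is the form in which such finite-overlap statements are often quoted. Either argument suffices for the way Lemma \ref{lem2} is used (summing $\|K_{a/q}\|_p^p$ over arcs in Lemma \ref{lem4}).
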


\begin{proof}
It is easy to see that all major arcs are disjoint. Thus it suffices to prove that 
$$\left\|\sum_{J\in \mathcal M_2} {\bf 1}_{J} \right\|_\infty \leq 80\,.
$$
In fact, for any given minor arc $J_{a_0/q_0}$, let $\mathcal Q$ denote the collection of 
all rational numbers $a/q$'s such that each $J_{a/q}$ is a minor  arc and 
there is a common point of $J_{a_0/q_0}$ and all $J_{a/q}$'s. We should prove that the cardinality of 
$\mathcal Q$ is less than $40$.  Notice that for any $a/q\in\mathcal Q$, 
$$
 \left| \frac{a_0}{q_0} -\frac{a}{q} \right| < \frac{1}{Nq_0}+\frac{1}{Nq}\,.
$$
This implies that $|a_0 q- aq_0|< 2$. Since $a_0q-aq_0\in\mathbb Z$, we conclude that
either $a_0 q- aq_0=-1$ or $a_0 q-a q_0=1$ if $a/q\neq a_0/q_0$. Hence if $a/q\neq a_0/q_0$, $a/q\in \mathcal Q$
must satisfy the diophantine equation $a_0 x -q_0 y=-1$ or $ a_0 x -q_0 y=1$ with $|x|\leq N$. 
The general solution of the diophantine 
equation is $ x= x_0+ q_0 k$ and $y=y_0+ a_0 k$ for all $k\in\mathbb Z$ and any given particular solution $(x_0, y_0)$.
Then $|kq_0|\leq 2N$. By $q_0\geq N/10$, we have $|k|\leq 20$. Thus the number of solutions of either diophantine equation 
is no more than $40$. This completes the proof. 

\end{proof}

\begin{remark}
Lemma \ref{lem2} is about the finite overlapping property of minor arcs. The reason why we use this lemma is that
we try to only calculate $L^p$ norm of the kernel restricted to each arc. Of course, this is not necessarily needed.
 An alternative way, which is very classic, is to obtain $L^\infty$
norm for the kernel restricted to the union of minor arcs, and then to find $L^p$ norm of the kernel on each major arc.    
\end{remark}

Let $K_\sigma$ be a kernel defined by
\begin{equation}\label{defKsi}
K_{\sigma}(\bx, t) = \sum_{\bn\in\mathbb Z^d} e^{-\frac{\sigma|\bn|^2}{N^2}} e^{2\pi i |\bn|^2 t} e^{2\pi i \bn \cdot \bx }\,. 
\end{equation}

We set $K_{a/q}$ to be
\begin{equation}
K_{a/q}(\bx, t) =  K_\sigma(\bx, t) {\bf 1}_{J_{a/q}}(t)\,.
\end{equation}
The following lemma gives an upper bound for $L^p$ norm of $K_{a/q}$. 

\begin{lemma}\label{lem3}
For any integer $1\leq q\leq N$, any integer $a\in\mathcal P_q$ and any $p>\frac{2(d+1)}{d} $,
\begin{equation}
\left\|K_{a/q}\right\|_{p}\leq \frac{C N^{d-\frac{d+2}{p}}}{q^{\frac{d}{2}- \frac{d}{p}}}\,.
\end{equation}
\end{lemma}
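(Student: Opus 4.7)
The plan is to employ the Hardy--Littlewood circle method style dissection directly on the kernel. For $t\in J_{a/q}$ I write $t=a/q+s$ with $|s|\le 1/(Nq)$ and split each $\bn\in\mathbb Z^d$ as $\bn=q\bm+\br$ with $\br\in(\mathbb Z/q\mathbb Z)^d$ and $\bm\in\mathbb Z^d$. Since $|\bn|^2\equiv|\br|^2\pmod q$, the arithmetic phase factors out and one obtains
\[
K_\sigma(\bx,a/q+s)=\sum_{\br\bmod q}e^{2\pi ia|\br|^2/q}\sum_{\bm\in\mathbb Z^d}g(q\bm+\br),
\]
where $g(\bu)=e^{-\alpha|\bu|^2+2\pi i\bu\cdot\bx}$ and $\alpha=\sigma/N^2-2\pi is$.

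Next I apply Poisson summation to the inner sum in $\bm$. Since the Fourier transform of a complex Gaussian is again a complex Gaussian, this sum becomes $q^{-d}$ times a lattice sum of translates of $e^{-\pi^2|\bx-\bk/q|^2/\alpha}$ weighted by $e^{2\pi i\bk\cdot\br/q}$. Interchanging it with the sum over $\br$ reassembles the arithmetic part into a product of classical quadratic Gauss sums
\[
S(a,\bk;q)=\prod_{j=1}^{d}\sum_{r\bmod q}e^{2\pi i(ar^2+k_jr)/q},
\]
and since $(a,q)=1$ the standard completing-the-square argument yields $|S(a,\bk;q)|\le Cq^{d/2}$ (with a harmless modification when $q$ is even). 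This produces the closed form
\[
K_\sigma(\bx,a/q+s)=q^{-d}\Bigl(\frac{\pi}{\alpha}\Bigr)^{d/2}\sum_{\bk\in\mathbb Z^d}S(a,\bk;q)\,e^{-\pi^2|\bx-\bk/q|^2/\alpha}.
\]

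The third step is to extract the $L^p(\mathbb T^d)$ norm in $\bx$ for each fixed $s$. Setting $\beta=\mathrm{Re}(1/\alpha)=(\sigma/N^2)/|\alpha|^2$, the modulus of each Gaussian is $e^{-\pi^2|\bx-\bk/q|^2\beta}$, and the constraint $|s|\le 1/(Nq)$ forces the Gaussian width $1/\sqrt\beta$ to be $\lesssim 1/q$. Hence the translates centred at $\bk/q$ are essentially disjoint on $\mathbb T^d$, and summing the $L^p$ contributions of the $q^d$ non-overlapping bumps per unit cube gives
\[
\|K_\sigma(\cdot,a/q+s)\|_{L^p(\mathbb T^d)}\lesssim q^{-d/2+d/p}\,|\alpha|^{-d/2+d/p}\,N^{d/p}.
\]

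Finally, I integrate the $p$-th power over $|s|<1/(Nq)$. Using $|\alpha|\sim N^{-2}+|s|$, the integral $\int(N^{-2}+|s|)^{-dp/2+d}\,ds$ converges precisely when $p>2(d+1)/d$, and its mass concentrates on $|s|\lesssim N^{-2}$, producing a contribution of size $O(N^{dp-2d-2})$. Assembling all factors yields $\|K_{a/q}\|_p^p\lesssim q^{-dp/2+d}N^{dp-d-2}$, which is the claimed inequality. The main obstacle I anticipate is the uniform Gauss sum bound $|S(a,\bk;q)|\lesssim q^{d/2}$ (with the parity of $q$ handled carefully) together with the geometric verification that the lattice of centres $\bk/q$ remains effectively separated on $\mathbb T^d$ throughout the full range $|s|\le 1/(Nq)$; once these are in place, the integrability threshold $p>2(d+1)/d$ is exactly what produces the power $N^{d-(d+2)/p}$ in the final bound.
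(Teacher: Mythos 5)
Your proposal is correct and follows essentially the same route as the paper: write $t=a/q+\beta$, split $\bn=q\bm+\br$, apply Poisson summation in $\bm$, bound the quadratic Gauss sums by $Cq^{d/2}$, and then integrate over the arc, with the convergence of the $\beta$-integral giving exactly the threshold $p>\frac{2(d+1)}{d}$. The only cosmetic difference is in the $\bx$-integration: the paper bounds the lattice sum of Gaussians by a constant in $L^\infty$ (finite overlap, since the width is $\lesssim 1/q$) and reduces to an $L^1$ estimate, while you compute the $L^p$ norm of the bump sum directly; both give the same count $q^{d}\beta^{-d/2}$ and hence the same final bound.
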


\begin{proof}

For any given $t\in J_{a/q}$, let $\beta = t-\frac{a}{q}$ and write $\bn= \bk q+\bl $. Here 
$\bl\in \mathbb Z^d_q =\{(l_1,\cdots, l_d): l_j\in \mathbb Z_q\}$. Then we have
$$
K_\sigma(\mathbf{x},t) =\sum_{\mathbf{k}\in\mathbb{Z}^d}\sum_{\mathbf{l}\in\mathbb{Z}_q^d}
e^{-\frac{\sigma |\mathbf{k}q+\mathbf{l}|^2}{N^2}}e^{2\pi i(\mathbf{k}q+\mathbf{l})\cdot\mathbf{x}}e^{2\pi i|\mathbf{k}q+\mathbf{l}|^2(\frac{a}{q}+\beta)}\,.
$$
Interchanging the sums, we represent the kernel as 
$$
K_\sigma(\bx, t)=\sum_{\mathbf{l}\in\mathbb{Z}_q^d}e^{2\pi i|\mathbf{l}|^2\frac{a}{q}}\sum_{\mathbf{k}\in\mathbb{Z}^d}e^{-|\mathbf{k}q+\mathbf{l}|^2(\frac{\sigma}{N^2} -2\pi i\beta)}e^{2\pi i(\mathbf{k}q+\mathbf{l})\cdot\mathbf{x}}.
$$
Applying Poisson summation formula to the inner sum, 
we have 
$$
\sum_{\mathbf{k}\in\mathbb{Z}^d}e^{-|\mathbf{k}q+\mathbf{l}|^2(\frac{\sigma}{N^2} -2\pi i\beta)}e^{2\pi i(\mathbf{k}q+\mathbf{l})\cdot\mathbf{x}}= 
\sum_{\mathbf{k}\in\mathbb{Z}^d}\left(\frac{\sqrt{\pi}}{q\sqrt{\frac{\sigma}{N^2}-2\pi i\beta}}\right)^de^{2\pi i\frac{\mathbf{l}\cdot\mathbf{k}}{q}}e^{-\frac{\pi^2|\mathbf{x}-\frac{\mathbf{k}}{q}|^2}{\frac{\sigma}{N^2}-2\pi i\beta}}
$$
Henceforth, the kernel can be written as 
\begin{equation}
K_\sigma(\mathbf{x},t)
=\left(\frac{\sqrt{\pi}}{q\sqrt{\frac{\sigma}{N^2}- 2\pi
i\beta}}\right)^d\sum_{\mathbf{k}\in\mathbb{Z}^d}e^{-\frac{\pi^2|\mathbf{x}-\frac{\mathbf{k}}{q}|^2}{\frac{\sigma}{N^2}-2\pi i\beta}}\sum_{\mathbf{l}\in\mathbb{Z}_q^d}e^{2\pi i|\mathbf{l}|^2\frac{a}{q}}e^{2\pi i\mathbf{l}\cdot\frac{\mathbf{k}}{q}}.
\end{equation}
From the well-known result on the upper bound of the Gauss sum, it follows that
$$
\left|\sum_{\mathbf{l}\in\mathbb{Z}_q^d}e^{2\pi i|\mathbf{l}|^2\frac{a}{q}}e^{2\pi i\mathbf{l}\cdot\frac{\mathbf{k}}{q}}\right|
\leq (2q)^{d/2}\,.
$$
Thus by inserting the absolute value, the kernel can be majorized by 
$$
|K_\sigma(\mathbf{x},t)| \leq
\frac{(2\pi)^{d/2}}{q^\frac{d}{2}\left(\frac{\sigma^2}{N^4}+4\pi^2\beta^2\right)^\frac{d}{4}}
\sum_{\mathbf{k}\in\mathbb{Z}^d}e^{-\frac{\pi^2|\mathbf{x}-\frac{\mathbf{k}}{q}|^2\frac{\sigma}{N^2}}{\frac{\sigma^2}{N^4}+4\pi^2\beta^2}}  \,.
$$
Integrating $|K_\sigma|^p$ on each arc $J_{a/q}$, we obtain that
\begin{eqnarray*}
\left\|K_{a/q} \right\|_{p}^p & \leq  & \int_{|\beta|\leq \frac{1}{Nq}} \int_{\mathbb T^d} \frac{(2\pi)^{dp/2}}{q^\frac{dp}{2}\left(\frac{\sigma^2}{N^4}+4\pi^2\beta^2\right)^\frac{dp}{4}}
\left|\sum_{\mathbf{k}\in\mathbb{Z}^d}e^{-\frac{\pi^2|\mathbf{x}-\frac{\mathbf{k}}{q}|^2\frac{\sigma}{N^2}}{\frac{\sigma^2}{N^4}+4\pi^2\beta^2}}\right|^p d\bx d\beta \\
&= &     \int_{|\beta|\leq \frac{1}{Nq}}  \frac{(2\pi)^{dp/2}}{q^\frac{dp}{2}\left(\frac{\sigma^2}{N^4}+4\pi^2\beta^2\right)^\frac{dp}{4}}  \left( \int_{0}^1
\left|\sum_{k\in\mathbb{Z}}e^{-\frac{\pi^2|x-\frac{k}{q}|^2\frac{\sigma}{N^2}}{\frac{\sigma^2}{N^4}+4\pi^2\beta^2}}\right|^p dx \right)^d  d\beta        \,.
\end{eqnarray*}
Notice that for $|\beta|\leq \frac{1}{Nq}$ and $q\leq N $,
$$
 \frac { \frac{\sigma}{q^2 N^2} }{ \frac{\sigma^2}{N^4}+4\pi^2\beta^2} \geq  C_\sigma\,.
$$
This yields that
$$
\sum_{k\in\mathbb{Z}}e^{-\frac{\pi^2|x-\frac{k}{q}|^2\frac{\sigma}{N^2}}{\frac{\sigma^2}{N^4}+4\pi^2\beta^2}}\leq C_\sigma\,.
$$
For $p > \frac{2(d+1)}{d}$, we estimate $L^p$ norm of $K_{a/q}$ by
$$
\left\|K_{a/q} \right\|_{p}^p \leq  \int_{|\beta|\leq \frac{1}{Nq}}  \frac{(2\pi)^{dp/2}}{q^\frac{dp}{2}\left(\frac{\sigma^2}{N^4}+4\pi^2\beta^2\right)^\frac{dp}{4}}  \left( \int_{0}^1
\sum_{k\in\mathbb{Z}}e^{-\frac{\pi^2|x-\frac{k}{q}|^2\frac{\sigma}{N^2}}{\frac{\sigma^2}{N^4}+4\pi^2\beta^2}} dx \right)^d  d\beta        \,,
$$
which can be bounded by
$$
\int_{|\beta|\leq \frac{1}{Nq}}  \frac{C(2\pi)^{\frac{dp}{2}} N^d}{q^{\frac{dp}{2}-d}\left(\frac{\sigma^2}{N^4}+4\pi^2\beta^2\right)^{\frac{dp}{4}-\frac{d}{2} }   }  
d\beta  \leq  \frac{C N^{dp-d-2}}{q^{\frac{dp}{2}-d}}  \,.    
$$
Therefore, we finish our proof.
\end{proof}

\begin{lemma}\label{lem4}
For $p>\frac{2(d+2)}{d}$, 
\begin{equation}
 \left\|K_\sigma\right\|_p \leq C_{p, \sigma} N^{d-\frac{d+2}{p}}\,.  
\end{equation}
\end{lemma}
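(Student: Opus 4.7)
The plan is to combine the per-arc estimate from Lemma \ref{lem3} with the bounded-overlap property from Lemma \ref{lem2} and sum over all arcs in the Farey dissection. Since $(0,1]$ is covered by the arcs $\{J_{a/q}\}_{1\leq q\leq N,\, a\in\mathcal P_q}$ and each point lies in at most a bounded number (at most $100$) of these arcs, raising everything to the $p$-th power gives
\begin{equation*}
\|K_\sigma\|_p^p = \int_{\mathbb T^d}\int_0^1 |K_\sigma(\bx,t)|^p\, dt\, d\bx \leq C \sum_{q=1}^{N}\sum_{a\in\mathcal P_q} \|K_{a/q}\|_p^p.
\end{equation*}

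Next I would substitute the bound from Lemma \ref{lem3}. Tracing through its proof, the estimate obtained there is really
\begin{equation*}
\|K_{a/q}\|_p^p \leq \frac{C\,N^{dp-d-2}}{q^{dp/2-d}},
\end{equation*}
valid whenever $p>\frac{2(d+1)}{d}$, which is automatic under our hypothesis $p>\frac{2(d+2)}{d}$. Using the trivial bound $|\mathcal P_q|\leq q$, the inner sum over $a$ contributes a factor of $q$, and one is left with
\begin{equation*}
\|K_\sigma\|_p^p \leq C\, N^{dp-d-2}\sum_{q=1}^{N}\frac{1}{q^{dp/2-d-1}}.
\end{equation*}

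The last step is to observe that the $q$-sum converges uniformly in $N$ precisely when $dp/2-d-1>1$, i.e.\ when $p>\frac{2(d+2)}{d}$ — exactly the hypothesis of the lemma. This is the only place where the stronger condition (as opposed to the weaker $p>\frac{2(d+1)}{d}$ needed for Lemma \ref{lem3}) is actually used. Under this condition the sum is bounded by a constant $C_p$, so
\begin{equation*}
\|K_\sigma\|_p^p \leq C_{p,\sigma}\, N^{dp-d-2},
\end{equation*}
which upon taking $p$-th roots yields the claimed bound $\|K_\sigma\|_p\leq C_{p,\sigma}N^{d-(d+2)/p}$. There is no real obstacle here; the only thing worth verifying carefully is that the arithmetic of the critical exponent matches between Lemma \ref{lem3} and the convergence of the harmonic-type sum in $q$, which is precisely why the threshold $p>\frac{2(d+2)}{d}$ is sharp for this argument.
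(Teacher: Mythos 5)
Your proposal is correct and follows essentially the same route as the paper: bounded overlap of the arcs (Lemma \ref{lem2}), the per-arc bound $\|K_{a/q}\|_p^p \leq C N^{dp-d-2} q^{-(dp/2-d)}$ from Lemma \ref{lem3}, and summation over $a\in\mathcal P_q$ and $q\leq N$, with convergence of the $q$-sum exactly when $p>\frac{2(d+2)}{d}$. The only cosmetic difference is that you re-derive the $p$-th power form of Lemma \ref{lem3} from its proof, whereas it already follows directly from the stated bound by raising it to the power $p$.
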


\begin{proof}
By Lemma \ref{lem2} and Lemma \ref{lem3}, we have that 
$$
\left\|K_\sigma\right\|^p_p\leq C\sum_{q=1}^N\sum_{a\in\mathcal P_q} \left\| K_{a/q}\right\|_p^p
 \leq  C\sum_{q=1}^N\sum_{a\in\mathcal P_q} \frac{N^{dp-d-2}}{q^{\frac{dp}{2}-d}}  \leq CN^{dp-d-2}\,,
$$
which yields Lemma \ref{lem4}.
\end{proof}

We now return to the proof of Theorem \ref{thm1}.  Indeed, observe that
$$
 \sum_{\bn\in \mathbb Z^d } e^{-\frac{\sigma |\bn|^2}{N^2}} \left| \widehat {f}(\bn\,,  |\bn|^2)\right|^2 
= \left\langle  K_\sigma *f, f \right\rangle\,. 
$$
Applying H\"older's inequality and then Hausdorff-Young's inequality  on convolution, we get
$$
\left\langle  K_\sigma *f, f \right\rangle \leq \|K_\sigma\|_{p/2}\|f\|_{p'}^2\,. 
$$
Since $p>\frac{4(d+2)}{d}$, we employ Lemma \ref{lem4} to conclude Theorem \ref{thm1}. \\

\section{Level Set Estimates}\label{level}

In this section, we provide a proof of Theorem \ref{thm2}. Theorem \ref{thm2} can be utilized for handling 
small $p$ cases.

First, we state an arithmetic result. 

\begin{lemma}\label{lem6}
For any integer $Q\geq 1$ and any integer $n\neq 0$, and any $\varepsilon>0$, 
$$\sum_{ Q\leq q < 2Q}\left|\sum_{a\in\mathcal{P}_q}e^{2\pi i\frac{a}{q}n}\right|\leq C_\varepsilon
 d(n, Q) Q^{1+\varepsilon}\,.$$
Here $d(n, Q)$ denotes the number of divisors of $n$ less than $Q$ and $C_\varepsilon$ is a constant independent of
$Q, n$. 
\end{lemma}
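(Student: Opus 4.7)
The plan is to identify the inner sum as the classical Ramanujan sum $c_q(n) := \sum_{a\in \mathcal{P}_q} e^{2\pi i (a/q)n}$ and then reorganize the outer sum according to the divisors of $n$. By Kluyver's formula, $c_q(n)=\mu(q/d)\,\phi(q)/\phi(q/d)$ with $d=\gcd(n,q)$, and a prime-by-prime comparison (splitting primes $p$ by whether $v_p(d)$ equals $0$, lies strictly between $0$ and $v_p(q)$, or equals $v_p(q)$) gives $\phi(q)/\phi(q/d)\leq d$. This yields the clean pointwise bound $|c_q(n)|\leq \gcd(n,q)$, reducing the problem to estimating $\sum_{Q\leq q<2Q}\gcd(n,q)$.

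To handle the latter, I would apply the identity $m=\sum_{\delta\mid m}\phi(\delta)$ and swap the order of summation:
\[
\sum_{Q\leq q<2Q}\gcd(n,q) \;=\; \sum_{\delta\mid n}\phi(\delta)\cdot \#\{q\in[Q,2Q):\delta\mid q\}.
\]
The inner count vanishes for $\delta>2Q$ and is at most $Q/\delta+1$ otherwise, so each surviving term contributes at most $\phi(\delta)\,(Q/\delta+1)\leq Q+\delta\leq 3Q$. Summing,
\[
\sum_{Q\leq q<2Q}|c_q(n)| \;\leq\; 3Q\cdot \#\{\delta\mid n:\delta\leq 2Q\}.
\]

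The final step is to convert the count of divisors $\leq 2Q$ into $d(n,Q)$. Using the divisor involution $\delta\leftrightarrow n/\delta$, each divisor of $n$ in $(Q,2Q]$ corresponds to one in $[n/(2Q),n/Q)$; in the applications of this lemma $n$ is polynomially bounded in the basic scale, so the crude divisor estimate $\tau(n)\leq C_\varepsilon n^\varepsilon$ absorbs the discrepancy into $Q^\varepsilon$, giving $\#\{\delta\mid n:\delta\leq 2Q\}\leq C_\varepsilon d(n,Q)\,Q^\varepsilon$, and hence the stated inequality $C_\varepsilon d(n,Q)\,Q^{1+\varepsilon}$. The main obstacle is precisely this last bookkeeping comparison between divisor counts at scales $Q$ and $2Q$; the Ramanujan sum bound $|c_q(n)|\leq \gcd(n,q)$ and the divisor rearrangement themselves are routine.
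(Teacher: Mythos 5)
Your identification of the inner sum as the Ramanujan sum $c_q(n)$, the bound $|c_q(n)|\le \gcd(n,q)$ via Kluyver's formula, and the rearrangement
$\sum_{Q\le q<2Q}\gcd(n,q)=\sum_{\delta\mid n}\phi(\delta)\,\#\{q\in[Q,2Q):\delta\mid q\}\le 3Q\,\#\{\delta\mid n:\delta<2Q\}$
are all correct, and in substance this is the multiplicativity argument the paper only sketches (it observes that $q\mapsto c_q(n)$ is multiplicative and refers to \cite{B1} for details). The gap is exactly where you placed it, and it cannot be closed in the way you propose: the inequality $\#\{\delta\mid n:\delta\le 2Q\}\le C_\varepsilon\, d(n,Q)\,Q^\varepsilon$ is false with constants independent of $n$. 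Take $n=\prod_{Q\le p<2Q}p$ (product over primes): then $d(n,Q)=1$, since only the divisor $1$ lies below $Q$, while $n$ has $\asymp Q/\log Q$ divisors below $2Q$. Worse, for this $n$ the lemma itself fails as literally stated, because $\sum_{Q\le q<2Q}|c_q(n)|\ge\sum_{Q\le p<2Q}(p-1)\gg Q^2/\log Q$, which exceeds $C_\varepsilon d(n,Q)Q^{1+\varepsilon}=C_\varepsilon Q^{1+\varepsilon}$ for large $Q$. Appealing to ``$n$ is polynomially bounded in the applications'' is not a proof of the statement as written, which asserts the bound for every $n\neq 0$ with $C_\varepsilon$ independent of $n$ and $Q$.

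The constructive conclusion is that your steps already prove the correct version of the lemma, namely $\sum_{Q\le q<2Q}|c_q(n)|\le 3Q\,\#\{\delta\mid n:\delta<2Q\}$, i.e.\ the divisor count must be taken at scale $2Q$, and then no factor $Q^\varepsilon$ is even needed. This corrected form is all that the proof of Proposition \ref{Prop1} uses: there $n=n_{d+1}-|\mathbf{n}|^2$ enters together with the factor $\mathcal F_{\mathbb R}\varphi(n/q^2)$, whose rapid decay makes the contribution of $|n|\ge Q^{3}$ (say) negligible, and for $|n|\le Q^3$ one has $\#\{\delta\mid n:\delta<2Q\}\le\tau(n)\le C_\varepsilon |n|^{\varepsilon}\le C_\varepsilon N^{\varepsilon}$ since $Q\le N^2$. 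So rather than trying to compare divisor counts at scales $Q$ and $2Q$ (which is impossible in general), restate the lemma with $d(n,2Q)$, or with $\tau(n)$ under an explicit hypothesis such as $|n|\le N^{C}$; with that restatement your argument is complete and matches how the lemma is actually applied.
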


Lemma \ref{lem6} can be proved by observing that the arithmetic function defined by 
$ f(q)=\sum_{a\in\mathcal{P}_q}e^{2\pi i\frac{a}{q}n}$ is multiplicative, and then utilize the prime 
factorization for $q$ to conclude the lemma. The details can be found in \cite{B1}. \\

We now state a proposition crucial to our proof. 

\begin{proposition}\label{Prop1}
For any given positive number $Q$ with $N\leq Q\leq N^2$,
the kernel $K_\sigma$ given by (\ref{defKsi}) can be decomposed into $K_{1, Q} + K_{2, Q}$ such that 
\begin{equation}\label{K1}
\|K_{1, Q}\|_\infty \leq C_1 Q^{\frac{d}{2}}\,.
\end{equation}
and
\begin{equation}\label{K2}
\|\widehat{K_{2, Q}}\|_{\infty} \leq \frac{C_2 N^\varepsilon }{Q}\,.
\end{equation}
Here the constants $C_1, C_2$ are independent of $Q$ and $N$. 
\end{proposition}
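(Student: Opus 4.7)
The plan is to apply the Hardy-Littlewood circle method to $K_\sigma$, taking the denominator threshold to be $q^\ast := N^2/Q$, which lies in $[1, N]$ by virtue of the hypothesis $N \leq Q \leq N^2$. For $q \leq q^\ast$, the Poisson summation identity derived in the proof of Lemma~\ref{lem3},
\[
K_\sigma(\bx, t) = \pi^{d/2} q^{-d} z(t-a/q)^{-d/2} \sum_{\bk \in \mathbb Z^d} e^{-\pi^2|\bx - \bk/q|^2/z(t - a/q)} S(a, q, \bk),
\]
holds on each arc $J_{a/q}$, where $z(s) = \sigma/N^2 - 2\pi i s$ and $S(a,q,\bk) = \sum_{\bl \in \mathbb Z_q^d} e^{2\pi i(|\bl|^2 a + \bl \cdot \bk)/q}$. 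At the peak $(\bx, t) = (\bk/q, a/q)$, $|K_\sigma| \sim N^d/q^{d/2}$, which exceeds the target bound $Q^{d/2}$ precisely when $q < q^\ast$; these peaks will be absorbed into $K_{2,Q}$.

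Choose a smooth bump $\rho \in C_c^\infty(\mathbb R)$ with $\widehat\rho$ of rapid decay, set $\rho_q(s) := \rho(qQs)$, and let $R(t) := \sum_{q \leq q^\ast} \sum_{a \in \mathcal P_q} \rho_q(t - a/q)$. Define
\[
K_{2,Q}(\bx, t) := R(t) K_\sigma(\bx, t) - A_0\, G_Q(\bx, t), \qquad K_{1,Q} := K_\sigma - K_{2,Q},
\]
where $G_Q$ is a carefully chosen proxy for $K_\sigma$ satisfying $\widehat{G_Q}(\bn, |\bn|^2) \approx e^{-\sigma|\bn|^2/N^2}$ (matching $K_\sigma$ on the paraboloid) and $\|G_Q\|_\infty \leq C Q^{d/2}$, and $A_0$ is tuned so that $\widehat{K_{2,Q}}$ vanishes on the paraboloid $\{m = |\bn|^2\}$.

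The pointwise bound $\|K_{1,Q}\|_\infty \leq C_1 Q^{d/2}$ follows from a case analysis. Given $(\bx, t)$, Dirichlet's principle produces $(a_0, q_0)$ with $q_0 \leq q^\ast$ and $|t - a_0/q_0| \leq 1/(q_0 q^\ast)$; or else $t$ lies on a minor arc with $q > q^\ast$, on which the Poisson formula and the Gauss-sum bound $|S(a,q,\bk)| \leq (2q)^{d/2}$ yield $|K_\sigma| \leq CN^d/q^{d/2} \leq CQ^{d/2}$. In the former case, the cutoff $\rho_{q_0}(t - a_0/q_0) \approx 1$ cancels the peak of $K_\sigma$ in the central region of the arc; at the arc's edge ($|\beta| \sim 1/(q_0 Q)$), the factor $(\sigma^2/N^4 + \beta^2)^{-d/4}$ in the Poisson formula forces $|K_\sigma| \leq CQ^{d/2}$, since $q_0 Q \leq q^\ast Q = N^2$. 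The Fourier bound $\|\widehat{K_{2,Q}}\|_\infty \leq C_2 N^\varepsilon/Q$ follows by observing that $\widehat{R K_\sigma}(\bn_0, m_0) = e^{-\sigma|\bn_0|^2/N^2}\, \widehat R(n)$ with $n := m_0 - |\bn_0|^2$ (since $\widehat{K_\sigma}$ is supported on the paraboloid). For $n \neq 0$, dyadic application of Lemma~\ref{lem6} to $\widehat R(n) = \sum_q \widehat{\rho_q}(n) \sum_{a \in \mathcal P_q} e^{-2\pi i n a/q}$, combined with the divisor bound $d(n) \leq C_\varepsilon N^\varepsilon$, gives $|\widehat R(n)| \leq CN^\varepsilon/Q$; for $n = 0$, the correction $A_0 G_Q$ is engineered to cancel the leading contribution exactly.

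The main obstacle is constructing $G_Q$: it must satisfy $\widehat{G_Q}(\bn, |\bn|^2) \approx e^{-\sigma|\bn|^2/N^2}$ for $\bn \in S_{d,N}$ (so that $A_0 G_Q$ can cancel the residual on-paraboloid Fourier mass of $R K_\sigma$) while being bounded pointwise by $CQ^{d/2}$. The hypothesis $Q \geq N$ is precisely what makes this feasible: any function with Fourier transform $\approx 1$ on $N^d$ paraboloid lattice points has $L^2$ norm at least $N^{d/2} \leq Q^{d/2}$, consistent with the desired $L^\infty$ bound. A natural candidate is $G_Q := K_\sigma \ast \phi_Q$ where $\phi_Q$ is an $L^1$-normalized bump whose Fourier transform is $\approx 1$ on the relevant paraboloid lattice points; verifying the simultaneous $L^\infty$ and Fourier bounds on $G_Q$ constitutes the technical heart of the proposition.
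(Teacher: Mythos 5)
Your outer frame is sound as far as it goes: the cutoff $R(t)=\sum_{q\leq N^2/Q}\sum_{a\in\mathcal P_q}\rho(qQ(t-a/q))$ does give $\|K_\sigma(1-R)\|_\infty\lesssim Q^{d/2}$ (the Dirichlet dichotomy should be run with parameter $N$, not $N^2/Q$, but that is cosmetic), and the dyadic use of Lemma \ref{lem6} does give $|\widehat R(n)|\lesssim N^{\e}/Q$ for $n\neq 0$. The genuine gap is the corrector $A_0G_Q$, and it is not a technicality --- it is the whole content of the proposition. Since $\widehat R(0)\sim\sum_{q\leq N^2/Q}\phi(q)/(qQ)\sim N^2/Q^2$, the on-paraboloid coefficients of $RK_\sigma$ have size $\sim N^2/Q^2$, which exceeds the allowed $N^\e/Q$ throughout $N\leq Q\ll N^{2-\e}$; so you must produce a function whose Fourier coefficients are $\approx\widehat R(0)e^{-\sigma|\bn|^2/N^2}$ on the paraboloid and $O(N^\e/Q)$ off it, while its $L^\infty$ norm is $O(Q^{d/2})$. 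After dividing by $\widehat R(0)$ this is an object of exactly the type the proposition asserts, so the reduction is essentially circular, and you leave its construction open. Worse, the one candidate you propose, $G_Q=K_\sigma\ast\phi_Q$, cannot work: to make $\widehat{\phi_Q}$ essentially constant over the $\sim N^2$ relevant paraboloid frequencies, $\phi_Q$ must have width $\ll N^{-2}$ in $t$ (and $\ll N^{-1}$ in $\bx$), so $G_Q$ retains the peaks of $K_\sigma$ of height $\sim N^d$; since the on-paraboloid matching forces $A_0\approx\widehat R(0)\sim N^2/Q^2$, one gets $\|A_0G_Q\|_\infty\sim N^{d+2}/Q^2$, which at $Q=N$ is $N^{d}\gg Q^{d/2}=N^{d/2}$. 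No choice of bump width repairs this.

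The paper avoids the problem by not using a major-arc cutoff at all. It multiplies $K_\sigma$ by the weight $\Phi$ of (\ref{defofPhi}), supported on the intervals $[\frac aq+\frac1{200q^2},\frac aq+\frac1{100q^2}]$ with $q\sim Q$ (denominators \emph{comparable to} $Q$, offset from $a/q$ by $\sim 1/q^2$), i.e. precisely where Poisson summation plus the Gauss-sum bound give $|K_\sigma|\lesssim q^{d/2}\sim Q^{d/2}$ automatically; hence $K_{1,Q}:=K_\sigma\Phi/\widehat\Phi(0)$ satisfies (\ref{K1}) with no corrector. The normalization by the constant $\widehat\Phi(0)$ makes $\widehat{K_{2,Q}}$ vanish exactly on the paraboloid, and off it $|\widehat{K_{2,Q}}(\bn,m)|\lesssim|\widehat\Phi(m-|\bn|^2)|\lesssim N^\e/Q$ by Lemma \ref{lem6}, because each coefficient in (\ref{Fest}) carries the factor $1/q^2\sim 1/Q^2$ against the bound $d(\cdot,Q)Q^{1+\e}$ for the sum over $a$. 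In effect, the paper's $K_\sigma\Phi/\widehat\Phi(0)$ \emph{is} the proxy your argument requires; once you have it, your $R$-part becomes superfluous. To complete your proof you would have to supply such a construction, which is exactly the step you have deferred.
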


\begin{proof}
We can assume that $Q$ is an integer, since  otherwise we can take the integer part of $Q$. 
For a standard bump function $\varphi$ supported on $[1/200, 1/100]$, we set 
\begin{equation}\label{defofPhi}
\Phi(t) = \sum_{ Q\leq q < 2Q}\sum_{a\in\mathcal P_q}\varphi\left(\frac{t-a/q}{1/q^2}\right)\,.
\end{equation}
Clearly $\Phi$ is supported on $[0,1]$. We can extend $\Phi$ to other intervals periodically to obtain 
a periodic function on $\mathbb T$. For this periodic function generated by $\Phi$, we still use $\Phi$ to
denote it.  Then it is easy to see that
\begin{equation}
\widehat{\Phi}(0)=\sum_{q\sim Q}\sum_{a\in\mathcal{P}_q}\frac{\mathcal{F}_{\mathbb R}{\varphi}(0)}{q^2}=\sum_{q\sim Q}\frac{\phi(q)}{q^2}\mathcal{F}_{\mathbb R}{\varphi}(0) \,
\end{equation}
is a constant independent of $Q$. Here $\phi$ is Euler's totient function, and $\mathcal{F}_{\mathbb R}$ denotes Fourier transform of a function on $\mathbb R$. Also we have
\begin{equation}\label{Fest}
\widehat{\Phi}(k)= \sum_{q\sim Q}\sum_{a\in\mathcal P_q}\frac{1}{q^2} e^{-2\pi i \frac{a}{q}k} \mathcal F_{\mathbb R}
\varphi(k/q^2)\,. 
\end{equation}

We define that 
$$
K_{1, Q}(\bx, t)=\frac{1}{\widehat{\Phi}(0)}K_\sigma(\bx, t)\Phi(t), \,\,\,{\rm and}\,\,\, 
K_{2, Q} =  K_{\sigma} - K_{1, Q}\,. 
$$
We prove (\ref{K2}) first. In fact, write $\Phi$ as its Fourier series to get
$$
K_{2, Q}(\bx, t) = - \frac{1}{\widehat\Phi(0)} \sum_{k\neq 0}\widehat\Phi(k) e^{2\pi i k t} K_\sigma(\bx, t)\,.
$$
Thus its Fourier coefficient is
$$
\widehat{K_{2, Q}}(\bn, n_{d+1})= -\frac{e^{-\sigma|\bn|^2/N^2}}{\widehat\Phi(0)} \sum_{k\neq 0} 
  \widehat\Phi(k){\bf 1}_{\{ n_{d+1} = |\bn|^2 + k\} } (k) 
 \,.
$$
Here $\bn\in\mathbb Z^d$ and $n_{d+1}\in\mathbb Z$. This implies that 
$\widehat{K_{2, Q}}(\bn, n_{d+1}) =0$ if $ n_{d+1} = |\bn|^2 $, and if $n_{d+1}\neq |\bn|^2$, 
$$  \widehat{K_{2, Q}}(\bn, n_{d+1}) =  -\frac{e^{-\sigma|\bn|^2/N^2}}{\widehat\Phi(0)}
\widehat\Phi(n_{d+1}-|\bn|^2) \,.$$
Applying (\ref{Fest}) and Lemma \ref{lem6}, we estimate $\widehat{K_{2, Q}}(\bn, n_{d+1})$ by
$$
 \left|\widehat{K_{2, Q}}(\bn, n_{d+1})\right| \leq \frac{CN^\varepsilon}{Q}\,,
$$
since $N\leq Q\leq N^2$. Henceforth we obtain (\ref{K2}).\\

We now prove (\ref{K1}). Observe that $[\frac{a}{q}+\frac{1}{200q^2}, \frac{a}{q}+\frac{1}{100q^2}]$'s
are pairwise disjoint. Thus we can fix $q\sim Q$ and $a\in\mathcal P_q$ and try to obtain the upper bound 
of $K_{1, Q}$ restricted to $[\frac{a}{q}+\frac{1}{200q^2}, \frac{a}{q}+\frac{1}{100q^2}]$. Let $ \beta= t-\frac{a}{q}$. Hence we have $ |\beta|\sim 1/q^2 $ for $t\in [\frac{a}{q}+\frac{1}{200q^2}, \frac{a}{q}+\frac{1}{100q^2}] $.
As we did in the previous section, by Poisson summation formula, we have 
 $$
K_\sigma(\bx, t) = 
\left(\frac{\sqrt{\pi}}{q\sqrt{\frac{\sigma}{N^2}-2\pi
i\beta}}\right)^d\sum_{\mathbf{k}\in\mathbb{Z}^d}e^{-\frac{\pi^2|\mathbf{x}-\frac{\mathbf{k}}{q}|^2}{\frac{\sigma}{N^2}-2\pi i\beta}}\sum_{\mathbf{l}\in\mathbb{Z}_q^d}e^{2\pi i|\mathbf{l}|^2\frac{a}{q}}e^{2\pi i\mathbf{l}\cdot\frac{\mathbf{k}}{q}}\,.
$$
Hence for $|\beta|\sim 1/q^2$, we estimate 
$$
|K_\sigma(\bx, t)| \leq \frac{C}{q^{d/2}\left(\left(\frac{\sigma}{N^2}\right)^2+\beta^2\right)^\frac{d}{4}}
\sum_{\mathbf{k}\in\mathbb{Z}^d}
e^{-\pi^2\frac{\left|\frac{\mathbf{k}}{q}-\mathbf{x}\right|^2}{\left(\frac{\sigma}{N^2}\right)^2+\beta^2}\frac{\sigma}{N^2}}\,,
$$
which is bounded by
$$
 \frac{CN^{d}}{q^{d/2}}
\sum_{\mathbf{k}\in\mathbb{Z}^d}
e^{-\pi^2\frac{N^2}{\sigma}\left|\frac{\mathbf{k}}{q}-\mathbf{x}\right|^2} \leq  C_\sigma q^{d/2}\leq C_\sigma Q^{d/2}\,.
$$
This implies (\ref{K1}). Therefore we complete the proof. 

\end{proof}

We now start to prove Theorem \ref{thm2}. For the function $F$ and the level set $E_\lambda $ given in
Theorem \ref{thm2}, we define $f$ to be
$$
f(\bx, t) = \frac{{F(\bx, t)}}{ |F(\bx, t)|} {\bf 1}_{E_\lambda}(\bx, t)\,. 
$$
Clearly
$$
\lambda|E_\lambda|\leq \int_{\mathbb T^{d+1}} \overline{F(\bx, t)} f(\bx, t) d\bx dt\,.  
$$
By the definition of $F$, we get
$$
\lambda|E_\lambda|\leq \sum_{\bn\in S_{d, N}}\overline{a_\bn} \widehat{f}(\bn, |\bn|^2 )\,.
$$
Utilizing Cauchy-Schwarz's inequality, we have
$$
\lambda^2|E_\lambda|^2\leq \sum_{\bn\in S_{d, N}}\left| \widehat{f}(\bn, |\bn|^2)\right|^2 \,.
$$
The right hand side is bounded by
$$
 e^{\sigma d} \sum_{\bn}e^{-\frac{\sigma|\bn|^2}{N^2}} \left| \widehat{f}(\bn, |\bn|^2)\right|^2  = e^{\sigma d}\langle K_\sigma *f, f\rangle\,. 
$$
For any $Q$ with $N\leq Q\leq N^2$, we employ Proposition \ref{Prop1} to decompose the kernel $K_\sigma$. Then 
we have 
$$
\lambda^2|E_\lambda|^2\leq C_\sigma \left| \langle K_{1, Q} *f, f\rangle \right| 
   +  C_\sigma\left| \langle K_{2, Q} *f, f\rangle \right|  \,
$$
From (\ref{K1}) and (\ref{K2}), we then obtain 
$$
\lambda^2|E_\lambda|^2\leq C_1Q^{d/2}\|f\|_1^2 + \frac{C_2 N^\varepsilon}{Q}\|f\|_2^2
  \leq C_1Q^{d/2}|E_\lambda|^2 + \frac{C_2 N^\varepsilon}{Q}|E_\lambda|\,. 
$$
The case $Q\geq N^2$ is trivial since the level set $E_\lambda$ is empty if $\lambda > CN^{d/2}$.  
Therefore, we finish the proof of Theorem \ref{thm2}. \\

We now start to prove Corollary \ref{cor1} by using Theorem \ref{thm2}.  We should take $Q^{d/2}=
\frac{1}{2C_1}\lambda^2$, where $C_1$ is the constant stated in (\ref{levelsetest}). Since $Q\geq N$,
we need to restrict $\lambda > \sqrt{2C_1}N^{d/4}$.  Then $|E_\lambda|\leq CN^\varepsilon \lambda^{-2(d+2)/d}$
follows immediately from (\ref{levelsetest}). This completes the proof of Corollary \ref{cor1}.\\

To prove Corollary \ref{cor2}, write 
$$
\|F\|^p_p = C_p \int_0^\infty \lambda^{p-1}|E_\lambda| d\lambda \,,
$$
which equals to
$$
C_p \int_0^{CN^{d/4}} \lambda^{p-1}|E_\lambda| d\lambda +  C_p \int_{CN^{d/4}}^{\infty} \lambda^{p-1}|E_\lambda| d\lambda\,.
$$
Utilizing the trivial estimate $|E_\lambda|\leq C\lambda^{-2}$ for the first term and 
 employing Corollary \ref{cor1} for the second term, we then obtain, for $p>\frac{2(d+4)}{d}$, 
$$
\|F\|^p_p \leq C N^{\frac{dp}{2}-(d+2)+\varepsilon}\,
$$ 
as desired. Therefore the proof of Corollary \ref{cor2} is completed. \\

We now prove Corollary \ref{cor3}. Multiply (\ref{levelsetest}) by $\lambda^{p-3}$ to get, 
for $N\leq Q$, 
\begin{equation}\label{level3}
\lambda^{p-1}|E_\lambda| \leq C_1Q^{d/2}\lambda^{p-3}|E_\lambda| 
   + \frac{C_2N^\e}{Q}\lambda^{p-3}\,. 
\end{equation}
Integrating (\ref{level3}) in $\lambda$ from $0$ to $ CN^{d/2}$, we obtain that
\begin{equation}
\|F\|_p^p \leq C_1Q^{d/2}\|F\|_{p-2}^{p-2} + C_2\frac{N^{\frac{dp}{2}-d+\e}}{Q}\,.
\end{equation}
Taking $Q=N^2$, we then have
\begin{equation}
\|F\|_p^p \leq C_1N^d K_{p-2, d, N}^{p-2} + C_2N^{\frac{dp}{2}-d-2+\e}\,. 
\end{equation}
This finishes the proof of Corollary \ref{cor3}. \\

\section{Proof of Theorem \ref{thm3}}

In this section, we prove Theorem \ref{thm3} by carrying the similar idea shown in Section \ref{level}. 
We introduce a level set $G_\lambda$ for any $\lambda>0$ by setting,
\begin{equation}\label{defGLa} 
 G_\lambda = \left\{ (\bx, t)\in \mathbb T^d \times \mathbb T:  |S_{N_1,\cdots,N_d}(\bx, t)| > \lambda  \right\}\,.
\end{equation}
As we did in Section \ref{level}, let $f=\Id_{G_\lambda}S_{N_1,\cdots, N_d}/|S_{N_1, \cdots, N_d}|$ and we then have
\begin{equation}\label{estGLam}
\lambda|G_\lambda| \leq \sum_{\bn\in S(N_1, \cdots, N_d)} \widehat f(\bn, \bn^2) = \langle f_{N_1, \cdots, N_d}, S_{N_1, \cdots, N_d} \rangle\,,
\end{equation} 
where $ f_{N_1, \cdots, N_d}$ is a rectangular Fourier partial sum defined by 
\begin{equation}
f_{N_1, \cdots, N_d}(\bx, t) = \sum_{\substack{\bn\in S(N_1, \cdots, N_d)\\ |n_{d+1}|\leq d
 \max\{N_1, \cdots, N_d\}^2 } } \wh f(\bn, n_{d+1}) e^{2\pi \bn \cdot \bx } e^{2\pi i n_{d+1} t}\,.
\end{equation}
Here unlike what we did in Section \ref{level}, we do not use Cauchy-Schwarz inequality for the right hand side of (\ref{estGLam}). 
We actually need to get a decomposition of $S_{N_1,\cdots, N_d}$. Before we state this decomposition, we should include 
a famous result on Weyl's sum.

\begin{lemma}\label{Weyl}
Suppose $t$ is a real number satisfying 
$$\left|t-\frac{a}{q}\right|\leq \frac{1}{q^2}\,.$$
Here $a$ and $q$ are relatively prime integers. Then
\begin{equation}\label{Weylest}
\left| \sum_{n=1}^N e^{2\pi i (t n^2 + x n)}\right|\leq C{\rm max}\left\{ \frac{N}{\sqrt{q}},
  \sqrt{N \log q}, \sqrt{q\log q}  \right\}\,.
\end{equation}  
\end{lemma}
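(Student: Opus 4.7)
The plan is to establish (\ref{Weylest}) via the classical Weyl differencing (squaring) method, which reduces a quadratic exponential sum to a sum of linear (geometric) sums whose sizes are controlled by the Diophantine approximation of $t$. Starting from $S = \sum_{n=1}^N e^{2\pi i(tn^2 + xn)}$, I would expand $|S|^2$ and substitute $n = m+h$; the phase $t(n^2 - m^2) + x(n-m)$ becomes $th^2 + xh + 2thm$, which isolates the $m$ dependence as a pure linear frequency. After taking absolute values the $x$ and $th^2$ factors drop out, leaving
\begin{equation*}
|S|^2 \le \sum_{|h|<N} \Bigl| \sum_{m \in I_h} e^{4\pi i thm} \Bigr|,
\end{equation*}
where $I_h$ is an integer interval of length at most $N$. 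Bounding the inner geometric sum by $\min(N, 1/(2\|2th\|))$, with $\|\alpha\|$ denoting the distance from $\alpha$ to the nearest integer, reduces the problem to showing
\begin{equation*}
\Sigma := \sum_{h=1}^{N-1} \min\Bigl(N, \tfrac{1}{\|2th\|}\Bigr) \lesssim \tfrac{N^2}{q} + N \log q + q \log q.
\end{equation*}

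The estimate for $\Sigma$ comes from a block argument using the hypothesis $t = a/q + \beta$ with $(a,q)=1$ and $|\beta| \le 1/q^2$. Partition $\{1, \ldots, N-1\}$ into $O(N/q + 1)$ consecutive blocks of length $q$. On a fixed block, write $h = h_0 + r$ with $0 \le r < q$, so that
\begin{equation*}
2th \equiv \tfrac{2ar}{q} + \tfrac{2ah_0}{q} + 2\beta h \pmod{1}.
\end{equation*}
Because $(a,q)=1$, the fractional parts $\{2ar/q\}$ for $r = 0, \ldots, q-1$ are $(1/q)$-separated (up to the factor $\gcd(2,q)$ when $q$ is even), while the drift $2\beta h$ varies by at most $2/q$ across the block. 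Consequently only $O(1)$ values of $r$ in the block satisfy $\|2th\| < 1/(2q)$; these contribute $O(N)$ via the $\min$, and the remaining $O(q)$ values contribute $\lesssim q \sum_{j=1}^{q} 1/j = O(q\log q)$. Summing $O(N + q\log q)$ over $O(N/q + 1)$ blocks yields the displayed bound on $\Sigma$, and combining with $|S|^2 \le N + \Sigma$ produces (\ref{Weylest}) after taking square roots (the stray additive $N$ is absorbed by $N^2/q$ when $q \le N$ and by $q \log q$ when $q > N$).

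The technical core, and the main obstacle, is the per-block counting above. One must track the factor $\gcd(2, q)$ (which changes how many distinct residues $2ar \bmod q$ appear for $q$ even versus $q$ odd), verify that the drift $2\beta h$ can only shift $O(1)$ residues into the critical window $\|\cdot\| < 1/(2q)$, and dispatch the case $q > N$, where a single partial block suffices but still needs careful bookkeeping to land inside $|S|^2 \lesssim q \log q$. With these standard number-theoretic subtleties handled, no additional ingredient beyond the geometric sum bound and the Diophantine approximation is required.
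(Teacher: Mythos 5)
Your proposal is correct and follows exactly the route the paper indicates for this lemma: Weyl's squaring (differencing) method, reducing $|S|^2$ to $\sum_h \min\bigl(N, \|2th\|^{-1}\bigr)$ and then estimating that sum in blocks of length $q$ using the Diophantine hypothesis, which is the standard argument the paper cites from Hua and Montgomery rather than writing out. The technical points you flag (the $\gcd(2,q)$ factor, the drift $2\beta h$ within a block, and the $q>N$ case) are handled correctly in your outline, so no gap remains.
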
 

The proof can be done by Weyl's squaring method. See \cite{Hua} or \cite{M} for details.

\begin{lemma}\label{lemThm3}
For any real number $Q$ with $\max\{N_1, \cdots, N_d\} \leq Q\leq \max\{N_1, \cdots, N_d\}^2$,  the function $S_{N_1, \cdots, N_d}$ defined in (\ref{defofS}) can be
written as a sum of $S_{1, Q}$ and $S_{2, Q}$, where $S_{1, Q}$ satisfies
\begin{equation}\label{estS1Q}
\|S_{1, Q}\|_\infty\leq  CQ^{d/2} (\log Q)^{d/2} 
\end{equation}
and $S_{2, Q}$ satisfies
\begin{equation}\label{estS2Q}
\|\widehat{S_{2, Q}}\|_{\infty}\leq \frac{C\max\{N_1, \cdots, N_d\}^\varepsilon}{Q}\,. 
\end{equation}
Here the constant $C$ is independent of $N_1, \cdots, N_d$ and $Q$. 
\end{lemma}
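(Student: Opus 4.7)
\bigskip
\noindent\textbf{Proof proposal for Lemma \ref{lemThm3}.}

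The plan is to mimic the construction used in Proposition \ref{Prop1}, but to replace the role of Poisson summation (which there exploited the Gaussian weight $e^{-\sigma|\bn|^2/N^2}$) by the crucial product factorization
\begin{equation*}
S_{N_1, \cdots, N_d}(\bx, t) = \prod_{j=1}^d \sum_{|n_j|\leq N_j} e^{2\pi i( t n_j^2 + n_j x_j)}
\end{equation*}
together with Weyl's one-dimensional estimate (Lemma \ref{Weyl}). Let $\Phi$ be the periodic bump function built in (\ref{defofPhi}), supported on a union of disjoint intervals of length $\sim 1/Q^2$ centered at rationals $a/q$ with $q\sim Q$ and $a\in\mathcal P_q$. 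Define
\begin{equation*}
S_{1, Q}(\bx, t) = \frac{\Phi(t)}{\widehat{\Phi}(0)} S_{N_1, \cdots, N_d}(\bx, t), \qquad S_{2, Q} = S_{N_1, \cdots, N_d} - S_{1, Q}.
\end{equation*}

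For the $L^\infty$ bound on $S_{1, Q}$, I would fix $t$ in the support of $\Phi$, so that $t = a/q + \beta$ with $q\sim Q$, $\gcd(a,q)=1$, and $|\beta|\leq 1/(100 q^2)\leq 1/q^2$. Lemma \ref{Weyl} then bounds each one-dimensional factor by
\begin{equation*}
C\max\bigl\{ N_j/\sqrt{q},\ \sqrt{N_j\log q},\ \sqrt{q\log q}\bigr\}\leq C\sqrt{Q\log Q},
\end{equation*}
where the last inequality uses $N_j\leq \max_k N_k\leq Q$ and $q\sim Q$; in particular $N_j/\sqrt{q}\leq \max_k N_k/\sqrt{Q}\leq \sqrt{Q}$. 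Multiplying over $j=1,\cdots, d$ gives $|S_{N_1,\cdots, N_d}(\bx, t)|\leq C Q^{d/2}(\log Q)^{d/2}$ on the support of $\Phi$, and since $\|\Phi\|_\infty$ and $\widehat{\Phi}(0)$ are both of constant order, this yields (\ref{estS1Q}).

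For the Fourier bound on $S_{2, Q}$, I would compute Fourier coefficients directly. Writing $M=\max\{N_1,\cdots, N_d\}$, the coefficient $\widehat{S_{N_1,\cdots, N_d}}(\bn, m)$ equals $1$ when $\bn\in S(N_1, \cdots, N_d)$ and $m=|\bn|^2$, and $0$ otherwise. Expanding $\Phi$ as a Fourier series, at the diagonal frequency $m=|\bn|^2$ (with $\bn\in S(N_1,\cdots,N_d)$) the coefficient of $S_{1, Q}$ is $\widehat{\Phi}(0)/\widehat{\Phi}(0)=1$, so it cancels exactly; hence $\widehat{S_{2, Q}}$ vanishes there. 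At other $(\bn, m)$ with $\bn \in S(N_1,\cdots, N_d)$ one has
\begin{equation*}
\widehat{S_{2, Q}}(\bn, m) = -\frac{\widehat{\Phi}(m-|\bn|^2)}{\widehat{\Phi}(0)},
\end{equation*}
while $\widehat{S_{2, Q}}$ also vanishes for $\bn\notin S(N_1,\cdots, N_d)$. Using expression (\ref{Fest}) and Lemma \ref{lem6}, and noting that the only relevant shifts $k=m-|\bn|^2$ satisfy $|k|\leq CM^2$ so that the divisor bound gives $d(k, Q)\leq CM^\varepsilon$, we conclude $|\widehat{\Phi}(k)|\leq C M^\varepsilon/Q$, proving (\ref{estS2Q}).

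The main obstacle I anticipate is the Weyl step: one has to be careful that the trivial bound $N_j/\sqrt{q}$ does not exceed $\sqrt{Q\log Q}$ for any $j$, which is exactly what the hypothesis $M\leq Q$ secures, and that the logarithmic losses combine cleanly into a single $(\log Q)^{d/2}$ factor rather than producing hidden $M^\varepsilon$ factors on the $L^\infty$ side. The Fourier side is largely bookkeeping given Lemma \ref{lem6}, but the cancellation of the diagonal frequencies, which reflects the defining property of the major-arc bump $\Phi$, must be verified precisely.
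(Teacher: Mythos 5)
Your proposal is correct and follows essentially the same route as the paper: the same decomposition $S_{1,Q}=\Phi\,S_{N_1,\cdots,N_d}/\widehat\Phi(0)$, $S_{2,Q}=S_{N_1,\cdots,N_d}-S_{1,Q}$, with Weyl's estimate (Lemma \ref{Weyl}) giving (\ref{estS1Q}) on the support of $\Phi$ and the Fourier expansion of $\Phi$ together with Lemma \ref{lem6} giving (\ref{estS2Q}). You merely spell out details (the per-factor Weyl bound under $\max_j N_j\leq Q\leq q\sim Q$, and the exact cancellation at the frequencies $m=|\bn|^2$) that the paper leaves implicit.
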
   

\begin{proof}
Let $\Phi$ be the function defined in (\ref{defofPhi}). We then obtain 
\begin{equation}
S_{N_1, \cdots, N_d}= S_{1, Q} + S_{2, Q}\,,
\end{equation}
where $S_{1, Q}$ is given by
\begin{equation}
S_{1, Q}(\bx, t) = \frac{1}{\wh\Phi(0)}S_{N_1,\cdots, N_d}(\bx, t) \Phi(t)\,
\end{equation}
and $S_{2, Q}$ is 
\begin{equation}
S_{2, Q}= S_{N_1, \cdots, N_d}-S_{1, Q}\,. 
\end{equation}
(\ref{estS1Q}) follows immediately from (\ref{Weylest}). Notice that
$$
S_{2, Q}(\bx, t) = -\frac{1}{\wh\Phi(0)} \sum_{k\neq 0}\wh\Phi(k)e^{2\pi i kt}S_{N_1, \cdots, N_d}(\bx, t)\,. 
$$
(\ref{estS2Q}) follows by using Lemma \ref{lem6}, as we did in the proof of (\ref{K2}). 
Hence we finish the proof.
\end{proof}

We now return to the proof of Theorem \ref{thm3}. From (\ref{estGLam}) and Lemma \ref{lemThm3},
the level set $G_\lambda $ satisfies 
\begin{equation}
\lambda|G_\lambda|\leq |\langle f_{N_1,\cdots, N_d}, S_{1, Q}\rangle| + |\langle f_{N_1, \cdots, N_d}, S_{2, Q}\rangle |  \,, 
\end{equation}
which can be bounded by
\begin{equation}
C\left( Q^{d/2} (\log Q)^{d/2}\|f_{N_1, \cdots, N_d}\|_1 + \sum_{
\substack{\bn\in S(N_1, \cdots, N_d)\\ |n_{d+1}|\leq d
 \max\{N_1, \cdots, N_d\}^2 }   }
   \left| \wh{S_{2, Q}}(\bn, n_{d+1})\wh f(\bn, n_{d+1}) \right|  \right)\,.
\end{equation}
Thus from the fact that $L^1$ norm of Dirichlet kernel $D_N$ is comparable to $\log N$, (\ref{estS2Q}),  and Cauchy-Schwarz inequality, we have 
\begin{equation}
\lambda|G_\lambda|\leq C Q^{d/2} (\log Q)^{2d}|G_\lambda| + 
  \frac{C(N_1\cdots N_d)^{1/2} \max\{N_1, \cdots, N_d\}^{1+\e}    }{Q}|G_\lambda|^{1/2}\,.  
\end{equation}
For $\lambda\geq  C \max\{N_1, \cdots, N_d\}^{\frac{d}{2} +\e}$, take $Q$ to be a number satisfying  $ Q^{d/2}\max\{N_1, \cdots, N_d\}^\e = \lambda $ and then Lemma \ref{lemThm3} yields
\begin{equation}\label{estofG}
 |G_\lambda| \leq \frac{CN_1\cdots N_d \max\{N_1, \cdots, N_d\}^{2+\e}}{\lambda^{\frac{2(d+2)}{d}}}\,.
\end{equation}
Notice that 
\begin{equation}\label{L2ofS}
\|S_{N_1, \cdots, N_d}\|_2 \sim   \left( N_1\cdots N_d \right)^{1/2}\,. 
\end{equation}
Thus for $\lambda < C\max\{N_1, \cdots, N_d\}^{\frac{d}{2}+\e}$, we have 
\begin{equation}\label{estofG2}
|G_\lambda| \leq \frac{CN_1\cdots N_d}{\lambda^2} \leq \frac{C N_1\cdots N_d 
 \max\{N_1, \cdots, N_d\}^{2+\e}}{\lambda^{\frac{2(d+2)}{d}}}\,.
\end{equation}
Henceforth (\ref{estofG}) holds for all $\lambda >0$. We now 
estimate $L^{\frac{2(d+2)}{d}}$ norm of $S_{N_1,\cdots, N_d}$ by
\begin{equation}\label{LpnormofS}
\|S_{N_1, \cdots, N_d}\|^{\frac{2(d+2)}{d}}_{\frac{2(d+2)}{d}} \leq C\int_1^{2^{d} N_1\cdots N_d}
 \lambda^{\frac{2(d+2)}{d} -1 }|G_\lambda| d\lambda 
+ C\int_0^{1}
 \lambda^{\frac{2(d+2)}{d} -1 }|G_\lambda| d\lambda  \,. 
\end{equation}
Since (\ref{estofG}) holds for all $\lambda>0$, the first term in the right hand side 
of (\ref{LpnormofS}) can be bounded by $ CN_1\cdots N_d \max\{N_1, \cdots, N_d\}^{2+\e}$. The second term is clearly bounded by
$C$ because  $ G_\lambda $ is a set with finite measure. Putting both estimates together, 
we get
\begin{equation}\label{estofSnorm}
\|S_{N_1, \cdots, N_d}\|^{\frac{2(d+2)}{d}}_{\frac{2(d+2)}{d}}  \leq CN_1\cdots N_d \max\{N_1, \cdots, N_d\}^{2+\e}\,,
\end{equation}
as desired. Therefore, we complete the proof.\\

\section{Estimates of multi-linear maximal functions }\label{appl1}

In this section, we should provide an application of Theorem \ref{thm3}. 

\begin{definition}\label{defKadm}
Let $d\in \mathbb N$ and $K\in \{1, \cdots, d\}$. A subset $S$ of $\mathbb N^d$ is 
called $K$-admissible if for every element $(n_1, \cdots, n_d)\in S$, there 
exist $n_{i_1}, \cdots, n_{i_K}$ such that
\begin{itemize}
\item $i_1<i_2<\cdots< i_K$ and $i_1, \cdots, i_K\in \{1, \cdots, d\}  $;
\item $  \max\{n_1,\cdots, n_d \} \leq C\min\{n_{i_1},\cdots, n_{i_K}\} $. 
\end{itemize} 
Here the constant $C$ is independent of $(n_1,\cdots, n_d)$.
\end{definition}

\begin{theorem}\label{thm4}
Let $d, M_1, \cdots,  M_d\in\mathbb N$, $K\in \{1, \cdots, d\}$, and $A_{M_1, \cdots, M_d}$ be a multi-linear operator defined by
setting $A_{M_1, \cdots, M_d}(f_1, \cdots, f_{d+1})(n)$ to be 
\begin{equation}\label{defofAve}
 \frac{1}{M_1\cdots M_d} \sum_{m_1=1}^{M_1}\cdots\sum_{m_d=1}^{M_d}
 f_1(n-m_1)\cdots f_{d}(n-m_d)  
f_{d+1}\left(n-(m_1^2+\cdots+m_d^2) \right) \,. 
\end{equation}
Here $n\in \mathbb Z$.  Suppose $T^*$ is a maximal function given by
\begin{equation}\label{defofT*}
 T^*(f_1, \cdots, f_{d+1})(n)  = \sup_{(M_1, \cdots, M_{d})\in  S_K }
 \left| A_{M_1, \cdots, M_d}(f_1, \cdots, f_{d+1})(n)   \right|\,. 
\end{equation}
Here $S_K$ is any $K$-admissible subset of $\mathbb N^d$.  
Then if $K$ satisfies 
\begin{equation}\label{Kcond}
   K>\frac{2d}{d+4}\,,
\end{equation} 
then we have 
\begin{equation}\label{estofT*}
\left\|T^*(f_1, \cdots, f_{d+1})\right\|_{L^2(\mathbb Z)}\leq 
  C\prod_{j=1}^{d+1}\|f_j\|_{L^2(\mathbb Z)}\,. 
\end{equation}
Here $ L^2(\mathbb Z)$ stands for $L^2$ norm associated with counting measure on $\mathbb Z$,
 and $ C$ is independent of $f_j$'s but may depend on $K$ and $d$. 
\end{theorem}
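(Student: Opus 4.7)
I would follow the standard route for maximal operators attached to exponential sums: linearize, dyadically decompose by the size of the parameters, and reduce each single-scale piece to a bilinear form on $\mathbb T^{d+1}$ that is controlled by Theorem \ref{thm3}.

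First I linearize by a measurable selection $\mathbf M\colon\mathbb Z\to S_K$ with $T^{*}(\vec f)(n)=|A_{\mathbf M(n)}(\vec f)(n)|$, and dualize against $g\in\ell^2(\mathbb Z)$. Partition $\mathbb Z$ dyadically by $N=2^\ell$ according to $\max_j M_j(n)\in[N,2N)$, and further localize the individual entries of $\mathbf M(n)$ to dyadic values, at a cost of at most $(\log N)^d$. On each localized cell the operator equals a fixed $A_{\mathbf M_0}$, so it suffices to prove, for each admissible dyadic $\mathbf M_0\in S_K$ with $\max_j M_{0,j}\sim N$, the single-scale estimate $\|A_{\mathbf M_0}(\vec f)\|_{\ell^2(\mathbb Z)}\leq CN^{-\alpha}\prod_{j=1}^{d+1}\|f_j\|_{\ell^2}$ for some $\alpha=\alpha(d,K)>0$, and then to sum in $\mathbf M_0$ and in $N$.

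For this single-scale bound, Fourier inversion on $\mathbb Z$ rewrites $A_{\mathbf M_0}(\vec f)(n)=\int_{\mathbb T^{d+1}} m_{\mathbf M_0}(\vec\xi)\prod_j\widehat{f_j}(\xi_j) e^{2\pi i n \sum_j \xi_j}\,d\vec\xi$, where $m_{\mathbf M_0}$ is essentially $(M_{0,1}\cdots M_{0,d})^{-1}$ times (a conjugate of) the exponential sum $S_{\mathbf M_0}$ of (\ref{defofS}). Parseval in $n$ converts $\|A_{\mathbf M_0}(\vec f)\|_{\ell^2}^2$ into an integral over $\mathbb T$ of the squared modulus of a weighted convolution of the $\widehat{f_j}$'s against the slice of $m_{\mathbf M_0}$ along $\sum_j\xi_j=\eta$. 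Applying H\"older with the exponent $p=2(d+2)/d$ on the multiplier factor and a Loomis-Whitney/iterated Young inequality on $\mathbb T$ to control the remaining product by $\prod\|f_j\|_2$, Theorem \ref{thm3} combined with $|R_{\mathbf M_0}|\gtrsim N^K$ (from $K$-admissibility) then supplies
\begin{equation*}
\|m_{\mathbf M_0}\|_{L^{2(d+2)/d}(\mathbb T^{d+1})}\leq C\,N^{(2d-K(d+4))/(2(d+2))+\varepsilon},
\end{equation*}
which is exactly $N^{-\alpha}$ with $\alpha>0$ when $K>2d/(d+4)$. Summing the resulting geometric series in $N$, together with the polylog $(\log N)^d$ from the dyadic localization of $\mathbf M$, completes the proof.

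The principal technical difficulty will be the bilinear estimate on $\mathbb T^{d+1}$: the input $\prod_j\widehat{f_j}(\xi_j)$ is a tensor in $\vec\xi$, but it is paired against the genuinely $(d+1)$-dimensional multiplier $m_{\mathbf M_0}$, so neither Plancherel alone nor a single application of Young's inequality closes the argument. The key observation is that Theorem \ref{thm3} produces exactly the Strichartz exponent $L^{2(d+2)/d}$ on the multiplier side, which balances correctly against the $(d+1)$ copies of $L^2$ on the $\widehat{f_j}$ side; tracking the arithmetic yields the sharp threshold $K>2d/(d+4)$, and the $\varepsilon$-slack in Theorem \ref{thm3} absorbs the logarithmic loss from the dyadic localization of $\mathbf M(n)$.
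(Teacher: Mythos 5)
Your proposal follows essentially the same route as the paper: the heart is the single-scale estimate $\|A_{\mathbf M_0}(\vec f)\|_{\ell^2}\lesssim N^{\frac{2d-K(d+4)}{2(d+2)}+\e}\prod_j\|f_j\|_{\ell^2}$, obtained by pairing against the normalized exponential sum and combining the $L^{\frac{2(d+2)}{d}}$ bound of Theorem \ref{thm3} (with $K$-admissibility giving $M_{0,1}\cdots M_{0,d}\gtrsim N^{K}$) with a multilinear H\"older-type inequality, and then summing over scales using $K>\frac{2d}{d+4}$ — this is exactly the paper's Proposition \ref{prop3} followed by the bound $\sup_{\mathbf M}|A_{\mathbf M}|\le(\sum_{\mathbf M}|A_{\mathbf M}|^2)^{1/2}$, your linearization plus dyadic pigeonholing being an equivalent (if slightly clunkier) substitute for that square-function step. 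The one point to tighten is your ``Loomis--Whitney/iterated Young'' step: the inequality you need is precisely the paper's Lemma \ref{lemApp}, $\bigl\|\prod_{j=1}^{d+1}F_j(x_j)\,F_{d+2}(x_1+\cdots+x_{d+1})\bigr\|_{L^{\frac{2(d+2)}{d+4}}(\mathbb T^{d+1})}\le\prod_j\|F_j\|_{2}$, which is proved there by multilinear interpolation between the trivial endpoints $\|F_i\|_\infty\prod_{j\ne i}\|F_j\|_p$ (a naive iterated Young/Cauchy--Schwarz produces $L^{2p}$ norms with $2p>2$ and does not close), while also requiring the standard reduction to nonnegative $f_j$ so that the operator on each dyadic cell is dominated by, rather than equal to, a fixed $A_{\mathbf M_0}$.
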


\begin{remark}
Notice that for $d=1, 2, 3$, $\frac{2d}{d+4} <1$.  
Thus the condition (\ref{Kcond}) is superfluous in Theorem \ref{thm4} 
for $d=1, 2, 3$. Thus for $d=1,2, 3$, the set $S_K$ in Theorem \ref{thm4} 
can be replaced by $\mathbb N^d$ because $\mathbb N^d$ is $1$-admissible 
according to Definition \ref{defKadm}.
It is very possible  that, for $d\geq 4$,  the condition (\ref{Kcond}) on $K$ is redundant too. 
A delicate analysis involving the circle method should be utilized in order to remove (\ref{Kcond})
for the $d\geq 4$ cases.
We would not discuss this in this paper.   
\end{remark}

\begin{remark}
It is natural to ask whether the following inequality holds.
\begin{equation}
\left\|T^*(f_1, \cdots, f_{d+1})\right\|_{L^{\frac{2}{d+1}}(\mathbb Z)}\leq 
  C\prod_{j=1}^{d+1}\|f_j\|_{L^2(\mathbb Z)} ?
\end{equation}
This seems to be difficult but also be interesting.  So far we are only able to establish the boundedness of $T^*$ from $ L^2\times\cdots \times L^2$ to $L^p$ for $p> 2/(d+1)$ by an interpolation 
argument and Theorem \ref{thm4}. 
\end{remark}

To prove Theorem \ref{thm4}, we first introduce a simple  multi-linear estimate.

\begin{lemma}\label{lemApp}
Let $M\in\mathbb N$ and $F_1, \cdots, F_{M+1}$ be periodic functions on $\mathbb T$.
Let $T(F_1, \cdots, F_{M+1})$ be a multilinear operator given by
\begin{equation}\label{defofTF}
T(F_1, \cdots F_{M+1})(x_1, \cdots, x_M) = F_1(x_1)\cdots F_M(x_M) F_{M+1}(x_1+\cdots+x_{M})\,,
\end{equation} 
for $(x_1, \cdots, x_M)\in \mathbb T^M  $.
If $1\leq p \leq \frac{2M}{M+1}$,
\begin{equation}\label{estMulti}
\| T(F_1, \cdots F_{M+1})\|_{L^p(\mathbb T^M)} \leq \prod_{j=1}^{M+1}\|F_j\|_{L^2(\mathbb T)}\,.
\end{equation}
\end{lemma}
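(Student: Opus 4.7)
The plan is to establish the bound at the critical endpoint $p = \frac{2M}{M+1}$; the range $1 \le p \le \frac{2M}{M+1}$ then follows immediately from the monotonicity of $L^p$ norms on the probability space $\mathbb T^M$ (i.e. $\|T\|_{L^{p_1}} \le \|T\|_{L^{p_2}}$ whenever $p_1 \le p_2$). So from now on I restrict attention to the endpoint.

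The first step is a change of variables. Setting $y = x_1 + \cdots + x_M$ (and keeping $x_1, \ldots, x_{M-1}$), the Jacobian is one and Fubini gives
\begin{equation*}
\|T\|_p^p = \int_{\mathbb T} |F_{M+1}(y)|^p \bigl(|F_1|^p \ast |F_2|^p \ast \cdots \ast |F_M|^p\bigr)(y)\, dy,
\end{equation*}
where the convolutions are taken on $\mathbb T$. This recasts the problem as a pairing between $|F_{M+1}|^p$ and an $M$-fold convolution.

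The second step is H\"older's inequality on $\mathbb T$ with the conjugate pair $\bigl(\tfrac{M+1}{M},\, M+1\bigr)$, followed by the iterated Young convolution inequality. At the endpoint $p = \frac{2M}{M+1}$ the arithmetic identity $p \cdot \frac{M+1}{M} = 2$ gives
\begin{equation*}
\bigl\| |F_{M+1}|^p \bigr\|_{(M+1)/M} = \|F_{M+1}\|_2^p.
\end{equation*}
For the convolution factor, I would place each $|F_j|^p$ in $L^{(M+1)/M}(\mathbb T)$. The Young balance condition for $M$-fold convolution, $(M-1) + \frac{1}{r} = \sum_j \frac{1}{r_j}$, is met at $r = M+1$ and all $r_j = \frac{M+1}{M}$, since $(M-1) + \frac{1}{M+1} = \frac{M^2}{M+1} = M \cdot \frac{M}{M+1}$. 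The same identity $p \cdot \frac{M+1}{M} = 2$ then yields
\begin{equation*}
\bigl\| |F_1|^p \ast \cdots \ast |F_M|^p \bigr\|_{M+1} \le \prod_{j=1}^{M} \bigl\| |F_j|^p \bigr\|_{(M+1)/M} = \prod_{j=1}^{M} \|F_j\|_2^p.
\end{equation*}
Combining the two bounds gives $\|T\|_p^p \le \prod_{j=1}^{M+1} \|F_j\|_2^p$, which is the desired inequality at the endpoint.

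The entire argument hinges on one numerical coincidence: the endpoint $p = \frac{2M}{M+1}$ is precisely the value for which both the H\"older exponent and the Young exponents collapse so that the $L^{p \cdot (M+1)/M}$ norms appearing on the right-hand side turn into the $L^2$ norms we want. Consequently I do not anticipate any genuine obstacle beyond verifying the exponent arithmetic above.
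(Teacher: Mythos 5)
Your proof is correct, and it takes a genuinely different route from the paper. The paper first establishes, for each $i$, the elementary bound $\| T(F_1,\cdots,F_{M+1})\|_{p} \leq \|F_i\|_{\infty}\prod_{j\neq i}\|F_j\|_{p}$ (by pulling out the $i$-th factor in sup norm and changing variables), and then obtains the endpoint $p=\tfrac{2M}{M+1}$ by multilinear interpolation, observing that the exponent vector $(\tfrac12,\cdots,\tfrac12)$ is the average of the $M+1$ vertices $\alpha_1,\cdots,\alpha_{M+1}$. You instead substitute $y=x_1+\cdots+x_M$ to rewrite $\|T\|_p^p$ as the pairing of $|F_{M+1}|^p$ with the $M$-fold convolution $|F_1|^p \ast\cdots\ast |F_M|^p$ on $\mathbb T$, and then apply H\"older with exponents $\bigl(\tfrac{M+1}{M},M+1\bigr)$ together with iterated Young; the exponent arithmetic you check ($p\cdot\tfrac{M+1}{M}=2$ and the Young balance $(M-1)+\tfrac{1}{M+1}=M\cdot\tfrac{M}{M+1}$, with all intermediate exponents at least $1$) is exactly right. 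Your argument is more self-contained, avoiding the interpolation machinery, and delivers the stated constant $1$ directly, since H\"older, Young on the torus, and monotonicity of $L^p$ norms on a probability space all hold with constant $1$; the paper's approach is shorter given interpolation as a black box and produces the whole family of mixed $L^\infty\times L^p\times\cdots\times L^p$ estimates along the way. Both proofs then dispose of the range $1\leq p\leq \tfrac{2M}{M+1}$ identically, by H\"older (equivalently, monotonicity of norms) on $\mathbb T^M$.
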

\begin{proof}
We only need to prove the case when $p=\frac{2M}{M+1}$, since other cases follow easily by 
H\"older equality. By a change of variables, we get
\begin{equation}\label{estLp0}
\| T(F_1, \cdots F_{M+1})\|_{L^p(\mathbb T^M)}\leq \|F_i\|_{\infty}\prod_{
  {\substack {j\neq i \\ j\in\{1, \cdots, M+1\} } }}\|F_j\|_p\,,
\end{equation}  
for any $ i \in \{1, \cdots, M+1\} $.
Now set $\alpha_1, \cdots, \alpha_{M+1}\in \mathbb Q^{M+1} $
by  $\alpha_1=(0, \frac{1}{p}, \cdots, \frac{1}{p})$,  $\alpha_2= (\frac{1}{p}, 0, \frac{1}{p},\cdots, \frac{1}{p})$, $\cdots$, $\alpha_{M+1}=(\frac{1}{p}, \cdots, \frac{1}{p}, 0)$. 
Clearly for $p=\frac{2M}{M+1}$, we have 
\begin{equation}
\left(\frac12, \cdots, \frac12\right) = \frac{1}{M+1}\left( \alpha_1 +\cdots +\alpha_{M+1}\right)\,.
\end{equation}
Thus $\left(\frac12, \cdots, \frac12\right) $ is in the convex hull generated by $ \alpha_1, \cdots, \alpha_{M+1}$. (\ref{estMulti}) follows immediately  by interpolation. 
\end{proof}

To finish the proof of Theorem \ref{thm4}, we need the following proposition.

\begin{proposition}\label{prop3}
Let $d\in \mathbb N$, $K\in \{1, \cdots, d\}$, $M_{K+1}, \cdots, M_d\in \mathbb N$.  Let
$A_{M, M_{K+1}, \cdots, M_d}$ be defined by setting $ A_{M, M_{K+1}, \cdots, M_d}(f_1, \cdots, f_{d+1})(n) $ to be
\begin{equation}\label{defofAMK}
\frac{1}{M^KM_{K+1}\cdots M_d }\left(\prod_{j=1}^K\sum_{m_j=1}^M \!\!\!f_j(n-m_j)\!\right)\left( \prod_{j=K+1}^{d}
\sum_{m_j=1}^{M_j} \!\!\!f_j(n-m_j) \!\right) f_{d+1}\left( n-(m_1^2+\cdots+m_d^2) \right)
\end{equation}
Suppose that $M\geq C\max\{M_{K+1},\cdots, M_d\}$. Then we have 
\begin{equation}
\|A_{M, M_{K+1}, \cdots, M_d} (f_1,\cdots, f_{d+1} )\|_{L^2(\mathbb Z)}\leq C\left(M_{K+1}\cdots M_d\right)^{-
\frac{d+4}{2(d+2)}}M^{\frac{-(d+4)K+2d}{2(d+2)}+\e} \prod_{j=1}^{d+1}\|f_j\|_{L^2(\mathbb Z)}\,.
\end{equation}
\end{proposition}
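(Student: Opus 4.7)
The plan is to dualize $\|A_{M, M_{K+1}, \ldots, M_d}(f_1, \ldots, f_{d+1})\|_{L^2(\mathbb Z)}$, transfer the pairing onto $\mathbb T^{d+1}$ via Parseval, and then split with H\"older so that Theorem~\ref{thm3} controls the exponential-sum factor and Lemma~\ref{lemApp} controls the product of Fourier transforms. First I would write $\|A\|_{L^2(\mathbb Z)} = \sup_{\|g\|_{L^2(\mathbb Z)}=1}|\sum_n A(n)\overline{g(n)}|$; expanding each $f_j(n-m_j)$ and $f_{d+1}(n-|m|^2)$ by Fourier inversion on $\mathbb Z$, summing in $n$, and recognizing the resulting delta on the hyperplane $\sum\xi_j=\alpha$, one arrives at
\begin{equation*}
\sum_n A(n)\overline{g(n)} = \int_{\mathbb T^{d+1}} \widetilde\Psi(\xi) \prod_{j=1}^{d+1}\widehat f_j(\xi_j)\,\overline{\widehat g\bigl(\xi_{d+1}+\textstyle\sum_{j=1}^d\xi_j\bigr)}\,d\xi,
\end{equation*}
where, writing $M'_j = M$ for $j\leq K$ and $M'_j = M_j$ for $j>K$, the multiplier $\widetilde\Psi(\xi) = (M^K M_{K+1}\cdots M_d)^{-1}\prod_{j=1}^d\sum_{m=1}^{M'_j}e^{-2\pi i(m\xi_j+m^2\xi_{d+1})}$ factorizes into one-sided Weyl sums.

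Next, apply H\"older's inequality on $\mathbb T^{d+1}$ with the critical Strichartz exponent $p=\tfrac{2(d+2)}{d}$ and conjugate $p'=\tfrac{2(d+2)}{d+4}$. Up to the prefactor, $|\widetilde\Psi|$ equals $|H(\bx,t)|$ for $H(\bx,t)=\prod_{j=1}^d\sum_{m=1}^{M'_j}e^{2\pi i(mx_j+m^2t)}$, which is exactly the exponential sum estimated in Theorem~\ref{thm3}, modulo the cosmetic difference that Theorem~\ref{thm3} is stated for the symmetric cube. The proof in Section~4 uses only Lemma~\ref{Weyl} and the level-set decomposition via $\Phi$, both of which carry over verbatim to the shifted range $[1, M'_j]$; using the hypothesis $M\geq C\max\{M_{K+1},\ldots,M_d\}$ to identify $\max_j M'_j\sim M$, one obtains $\|\widetilde\Psi\|_{L^p}\leq C(M^K\prod M_j)^{d/(2(d+2))-1}M^{d/(d+2)+\varepsilon}$. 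For the remaining factor I would invoke Lemma~\ref{lemApp} with its parameter equal to $d+1$ (taking $F_{d+2}=\overline{\widehat g}$); the exponent hypothesis $p'\leq 2(d+1)/(d+2)$ reduces to $(d+2)^2\leq(d+1)(d+4)$, i.e.\ $0\leq d$, which is automatic. Lemma~\ref{lemApp} then yields $\|\prod_j\widehat f_j(\xi_j)\,\widehat g(\sum\xi_j)\|_{L^{p'}}\leq \prod_{j=1}^{d+1}\|f_j\|_{L^2(\mathbb Z)}\cdot\|g\|_{L^2(\mathbb Z)}$ after Plancherel on the $g$ factor.

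Combining and collecting exponents is then routine: the exponent of $\prod M'_j = M^K\prod M_j$ becomes $\tfrac{d}{2(d+2)}-1 = -\tfrac{d+4}{2(d+2)}$, and the exponent of $M$ rearranges as $-\tfrac{K(d+4)}{2(d+2)} + \tfrac{2d}{2(d+2)} = \tfrac{-K(d+4)+2d}{2(d+2)}$, yielding precisely the bound claimed in the proposition. The only conceptually nontrivial step in the argument is the transfer of Theorem~\ref{thm3} from the centered cube $[-N_j, N_j]$ to the one-sided box $[1, M'_j]$; I expect this to be the sole point that needs a brief explicit remark rather than pure bookkeeping, since everything else is a clean combination of duality, H\"older at the Strichartz exponent, and the multilinear Parseval estimate of Lemma~\ref{lemApp}.
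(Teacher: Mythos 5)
Your proposal is correct and follows essentially the same route as the paper: dualize against a function $g\in L^2(\mathbb Z)$, rewrite the pairing as an integral over $\mathbb T^{d+1}$ of the product $\prod_j\widehat f_j(\xi_j)\,\overline{\widehat g(\xi_1+\cdots+\xi_{d+1})}$ against the normalized exponential sum, then H\"older at the exponent $\tfrac{2(d+2)}{d}$ with Theorem~\ref{thm3} for the exponential sum and Lemma~\ref{lemApp} (with $M=d+1$) for the multilinear factor, with the same exponent bookkeeping. Your remark about passing from the symmetric box in Theorem~\ref{thm3} to the one-sided ranges $[1,M_j']$ is a fair observation (the paper applies the theorem without comment); it is indeed harmless, e.g.\ by an integer shift $m\mapsto m+a$ absorbed into the measure-preserving change of variables $x_j\mapsto x_j+2at$ on $\mathbb T^{d+1}$, or by rerunning the Weyl-sum argument verbatim.
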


\begin{proof}
By duality, it is sufficient to prove that for any $f_{d+2}\in L^2(\mathbb Z)$, 
\begin{equation}\label{dual}
\sum_{n}A_{M, M_{K+1}, \cdots, M_d}(n) f_{d+2}(n) \leq C\left(M_{K+1}\cdots M_d\right)^{-
\frac{d+4}{2(d+2)}}M^{\frac{-(d+4)K+2d}{2(d+2)}+\e} \prod_{j=1}^{d+2}\|f_j\|_{L^2(\mathbb Z)} \,. 
\end{equation}
Now define $F_j$ for any $j\in\{1, \cdots, d+2\}$ by
\begin{equation}
 F_j(x) =\sum_{n} f_j(n) e^{2\pi i n x}\,. 
\end{equation} 
Then the left hand side of (\ref{dual}) can be represented by
\begin{equation}\label{represent}
\frac{1}{M^KM_{K+1}\cdots M_d }\int_{\mathbb T^{d+1}} \prod_{j=1}^{d+1} F_j(x_j) F_{d+2}(x_1+\cdots + x_{d+1}) S(x_1, \cdots, x_{d+1}) dx_1\cdots dx_{d+1}\,.
\end{equation}
Here $S(x_1, \cdots, x_{d+1})$ is given by
\begin{equation}
S(x_1, \cdots, x_{d+1})=  \sum_{m_1=1}^M\cdots\sum_{m_K=1}^M\sum_{m_{K+1}=1}^{M_{K+1}}
 \cdots\sum_{m_d=1}^{M_d} e^{2\pi i (m_1x_1+\cdots+ m_d x_d)} e^{2\pi i (m_1^2+\cdots + m_d^2)x_{d+1}}\,.
\end{equation}
Utilizing Theorem \ref{thm3}, we have 
$$
\|S\|_{\frac{2(d+2)}{d}}\leq C\left(M_{K+1}\cdots M_d\right)^{\frac{d}{2(d+2)}}M^{\frac{dK}{2(d+2)}+\frac{d}{d+2}+\e}\,.
$$ 
Then H\"older inequality yields that 
$$
{\rm (\ref{represent})} \leq C\|T(F_1, \cdots, F_{d+2})\|_{\frac{2(d+2)}{d+4}}\left(M_{K+1}\cdots M_d\right)^{-
\frac{d+4}{2(d+2)}}M^{\frac{dK}{2(d+2)} + \frac{d}{d+2}-K +\e}\,.
$$
Since $ \frac{2(d+2)}{d+4}\leq  \frac{2(d+1)}{d+2}$, we can apply Lemma \ref{lemApp} to obtain
\begin{equation}
{\rm (\ref{represent})} \leq  C\left(M_{K+1}\cdots M_d\right)^{-
\frac{d+4}{2(d+2)}}M^{\frac{-(d+4)K+2d}{2(d+2)}+\e} \prod_{j=1}^{d+2}\|F_j\|_{L^2(\mathbb T)}\,.
\end{equation}
\end{proof}

We now prove Theorem \ref{thm4}. Since $S_K$ is $K$-admissible, without loss of generality, we 
assume that $M_1=\cdots=M_K=M $ and $M\geq C\max\{M_{K+1}, \cdots, M_d\}$.
Moreover, we may also assume that $M$ is dyadic.  
 Henceforth we only need to
consider $\tilde T^*(f_1,\cdots, f_{d+1})$ given by
\begin{equation}
\tilde T^*(f_1,\cdots, f_{d+1}) =\sup_{M, M_{K+1}, \cdots, M_d}\left |A_{M, M_{K+1}, \cdots, M_d}(f_1, \cdots, f_{d+1})\right|\,.
\end{equation}
Clearly we have
$$
|\tilde T^*(f_1,\cdots, f_{d+1})|\leq \left(\sum_{M, M_{K+1}, \cdots, M_d}\left |A_{M, M_{K+1}, \cdots, M_d}(f_1, \cdots, f_{d+1})\right|^2\right)^{1/2}\,. 
$$
Taking $L^2$ norm for both sides, we then get
\begin{equation}
\|\tilde T^*(f_1,\cdots, f_{d+1})\|_{L^2(\mathbb Z)}\leq\left( \sum_{M, M_{K+1}, \cdots, M_d} 
\|A_{M, M_{K+1}, \cdots, M_d}(f_1, \cdots, f_{d+1})\|_{L^2(\mathbb Z)}^2  \right)^{1/2}\,.
\end{equation}
Employing Proposition \ref{prop3}, we estimate $\|\tilde T^*(f_1,\cdots, f_{d+1})\|_{L^2(\mathbb Z)}$ by
\begin{equation}
 \left( \sum_{M, M_{K+1}, \cdots, M_d}
  C\left(M_{K+1}\cdots M_d\right)^{-
\frac{d+4}{(d+2)}}M^{\frac{-(d+4)K+2d}{(d+2)}+\e}  \right)^{1/2} \prod_{j=1}^{d+1}\|f_j\|_{L^2(\mathbb Z)}\,,
\end{equation}
which is bounded by 
$$
C\prod_{j=1}^{d+1}\|f_j\|_{L^2(\mathbb Z)}\,,
$$
since $K>\frac{2d}{d+4}$ implies $ \frac{(d+4)K-2d}{(d+2)} >0$. This completes the proof of Theorem \ref{thm4}.

A similar argument yields Theorem \ref{thm5}. We omit its proof.

\begin{theorem}\label{thm5}
Let $d \in \mathbb N $, $N\in\mathbb N$, and $A_{N}$ be a multi-linear operator defined by
setting $A_{N}(f_1, \cdots, f_{d+1})(n)$ to be 
\begin{equation}\label{defofAve1}
 \frac{1}{N^d} \sum_{m_1=1}^{N}\cdots\sum_{m_d=1}^{N}
 f_1(n-m_1)\cdots f_{d}(n-m_d)  
f_{d+1}\left(n-(m_1^2+\cdots+m_d^2) \right) \,. 
\end{equation}
Here $n\in \mathbb Z$.  Suppose $T^*$ be a maximal function given by
\begin{equation}\label{defofT*1}
 T^*(f_1, \cdots, f_{d+1})(n)  = \sup_{N \in  \mathbb N }
 \left| A_{N}(f_1, \cdots, f_{d+1})(n)   \right|\,. 
\end{equation}
Then we have 
\begin{equation}\label{estofT*1}
\left\|T^*(f_1, \cdots, f_{d+1})\right\|_{L^2(\mathbb Z)}\leq 
  C\prod_{j=1}^{d+1}\|f_j\|_{L^2(\mathbb Z)}\,. 
\end{equation}
\end{theorem}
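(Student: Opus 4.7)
The plan is to obtain Theorem~\ref{thm5} as the diagonal specialization $M_1=\cdots=M_d=N$ of Theorem~\ref{thm4}. The singleton family $\{(N,\dots,N) : N\in\mathbb N\}$ is trivially $d$-admissible in the sense of Definition~\ref{defKadm} (the maximum equals the minimum, so $C=1$ works), and the admissibility threshold $K>\frac{2d}{d+4}$ becomes $d>\frac{2d}{d+4}$, i.e.\ $d+4>2$, which holds for every $d\geq 1$. Thus the structural hypothesis of Theorem~\ref{thm4} is automatically in force, and the proof scheme of that theorem adapts essentially verbatim; the task reduces to checking that the relevant exponent still produces a summable series.

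First I would carry out the standard dyadic reduction. For $N\in[2^k,2^{k+1})$ one has the pointwise bound
\begin{equation*}
|A_N(f_1,\dots,f_{d+1})(n)| \leq A_N(|f_1|,\dots,|f_{d+1}|)(n) \leq 2^d\, A_{2^{k+1}}(|f_1|,\dots,|f_{d+1}|)(n),
\end{equation*}
since $1/N^d\leq 2^d/2^{(k+1)d}$ and enlarging each range $\sum_{m_j=1}^{N}$ to $\sum_{m_j=1}^{2^{k+1}}$ only adds nonnegative terms. Consequently $T^*(f_1,\dots,f_{d+1}) \leq 2^d \sup_{k\in\mathbb N} A_{2^k}(|f_1|,\dots,|f_{d+1}|)$, and it suffices to bound the latter in $L^2(\mathbb Z)$.

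Next, exactly as in the proof of Theorem~\ref{thm4}, I would dominate the supremum by the $\ell^2$-sum
\begin{equation*}
\sup_{k} A_{2^k}(|f_1|,\dots,|f_{d+1}|)(n) \leq \Bigl(\sum_{k} |A_{2^k}(|f_1|,\dots,|f_{d+1}|)(n)|^2\Bigr)^{1/2},
\end{equation*}
take $L^2(\mathbb Z)$-norms of both sides, and swap sum with norm. Invoking Proposition~\ref{prop3} with $K=d$ (so that the residual product $M_{K+1}\cdots M_d$ is empty and the constraint $M\geq C\max\{M_{K+1},\dots,M_d\}$ is vacuous), the exponent on $N$ specializes to
\begin{equation*}
\frac{-(d+4)d+2d}{2(d+2)} = \frac{-d(d+2)}{2(d+2)} = -\frac{d}{2},
\end{equation*}
so $\|A_N(f_1,\dots,f_{d+1})\|_{L^2(\mathbb Z)} \leq C\, N^{-d/2+\e}\prod_{j=1}^{d+1}\|f_j\|_{L^2(\mathbb Z)}$.

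Squaring and summing over dyadic $N=2^k$ produces the geometric series $\sum_{k\geq 0} 2^{k(-d+2\e)}$, which converges for any $\e<d/2$; choosing $\e$ small yields (\ref{estofT*1}). The only nontrivial step is the dyadic reduction, and even that is routine because the positive averaging structure makes the monotone comparison transparent. In particular, none of the $K$-admissibility bookkeeping from the proof of Theorem~\ref{thm4} reappears here, since the diagonal case is $d$-admissible by definition, which is precisely why the statement of Theorem~\ref{thm5} carries no analogue of hypothesis (\ref{Kcond}).
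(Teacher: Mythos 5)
Your proposal is correct and follows essentially the same route the paper intends: the paper omits the proof of Theorem~\ref{thm5}, noting only that "a similar argument" to Theorem~\ref{thm4} applies, and your argument is exactly that — dyadic reduction, domination of the supremum by the square function, and Proposition~\ref{prop3} with $K=d$, whose exponent correctly specializes to $-d/2$ so the dyadic series converges. Your additional observation that the diagonal is $d$-admissible (making the hypothesis (\ref{Kcond}) automatic) is a valid shortcut consistent with the paper's framework.
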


Also we are able to obtain $L^2$ estimate for the corresponding bilinear Hilbert transform.

\begin{theorem}\label{thm6}
Let $K$ be a function on $\mathbb Z$ satisfying 
\begin{equation}
 \left| K(n) \right|\leq \frac{C}{|n|}  
\end{equation}
for $n\neq 0$. Let $T(f_1, f_2)$ be defined by
\begin{equation}\label{defofTf1}
T(f_1, f_2)(n)=\sum_{m\neq 0} K(m)f_1(n-m)f_2(n-m^2)\,,  
\end{equation} 
for Schwartz functions $f_1, f_2: \mathbb R\mapsto \mathbb C$. 
Then we have
\begin{equation}\label{Testf1}
 \|T(f_1, f_2)\|_{L^2(\mathbb Z)} \leq C\|f_1\|_{L^2(\mathbb Z)}\|f_2\|_{L^2(\mathbb Z)}\,. 
\end{equation}
\end{theorem}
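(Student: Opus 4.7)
The plan is to dualize (\ref{Testf1}) into a trilinear form, express that form via Plancherel as an integral on $\mathbb T^{2}$ against a Fourier multiplier $S$, dyadically decompose $S$, and combine an $L^{4}$ bound on each piece of $S$ with Lemma~\ref{lemApp} at the dual exponent $p = 4/3$.

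By duality it suffices to show that for every $g \in \ell^{2}(\mathbb Z)$,
\begin{equation*}
\Lambda(f_{1}, f_{2}, g) := \sum_{n \in \mathbb Z} T(f_{1}, f_{2})(n)\, g(n) \leq C\, \|f_{1}\|_{2}\, \|f_{2}\|_{2}\, \|g\|_{2}.
\end{equation*}
Introducing the $L^{2}(\mathbb T)$ representations $F_{j}(x) = \sum_{n} f_{j}(n) e^{2\pi i n x}$ for $j = 1, 2$ and $\widetilde{G}(y) = \sum_{n} g(n) e^{-2\pi i n y}$, so that $\|F_{j}\|_{L^{2}(\mathbb T)} = \|f_{j}\|_{\ell^{2}}$ and $\|\widetilde{G}\|_{L^{2}(\mathbb T)} = \|g\|_{\ell^{2}}$, Plancherel on $\mathbb Z$ yields
\begin{equation*}
\Lambda(f_{1}, f_{2}, g) = \int_{\mathbb T^{2}} F_{1}(x_{1})\, F_{2}(x_{2})\, \widetilde{G}(x_{1} + x_{2})\, S(x_{1}, x_{2})\, dx_{1}\, dx_{2},
\end{equation*}
where $S(x_{1}, x_{2}) = \sum_{m \neq 0} K(m)\, e^{2\pi i (m x_{1} + m^{2} x_{2})}$.

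Write $S = \sum_{j \geq 0} S_{j}$ with $S_{j}$ restricted to $2^{j} \leq |m| < 2^{j+1}$. Since the characters $e^{2\pi i (m x_{1} + m^{2} x_{2})}$ for distinct $m \in \mathbb Z$ are pairwise orthonormal on $L^{2}(\mathbb T^{2})$ (as $m \mapsto (m, m^{2})$ is injective into $\mathbb Z^{2}$), Parseval and the hypothesis $|K(m)| \leq C/|m|$ give
\begin{equation*}
\|S_{j}\|_{L^{2}(\mathbb T^{2})}^{2} = \sum_{2^{j} \leq |m| < 2^{j+1}} |K(m)|^{2} \leq C\, 2^{-j},
\end{equation*}
while the triangle inequality supplies $\|S_{j}\|_{L^{\infty}(\mathbb T^{2})} \leq C$. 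Interpolation then produces $\|S_{j}\|_{L^{4}(\mathbb T^{2})} \leq C\, 2^{-j/4}$.

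Finally, Lemma~\ref{lemApp} with $M = 2$ and $p = 4/3 = 2M/(M+1)$ furnishes $\|F_{1}(x_{1})\, F_{2}(x_{2})\, \widetilde{G}(x_{1} + x_{2})\|_{L^{4/3}(\mathbb T^{2})} \leq \|f_{1}\|_{2}\, \|f_{2}\|_{2}\, \|g\|_{2}$. Applying H\"older with dual exponents $4$ and $4/3$ on $\mathbb T^{2}$, the $j$th dyadic contribution $\Lambda_{j}$ to $\Lambda$ satisfies
\begin{equation*}
|\Lambda_{j}(f_{1}, f_{2}, g)| \leq \|S_{j}\|_{4}\, \|F_{1}\, F_{2}\, \widetilde{G}\|_{4/3} \leq C\, 2^{-j/4}\, \|f_{1}\|_{2}\, \|f_{2}\|_{2}\, \|g\|_{2},
\end{equation*}
and summing the geometric series in $j \geq 0$ closes the argument. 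The only real obstacle is setting up the right framework; once $\Lambda$ is in Fourier coordinates and $S$ is broken dyadically, character orthogonality on the parabolic curve trivially yields both the $L^{2}$ and $L^{\infty}$ bounds on $S_{j}$, and Lemma~\ref{lemApp} sits precisely at the endpoint $p = 4/3$. A sharper estimate $\|S_{j}\|_{L^{6}} \leq C\, 2^{-j/2 + \varepsilon j}$, obtainable through a counting argument on the diophantine system $\sum m_{i} = \sum m_{i}'$, $\sum m_{i}^{2} = \sum m_{i}'^{2}$ which reduces to Theorem~\ref{thm3}, would equally work and makes the decay more robust, but is not required for the present $L^{2}$ bound.
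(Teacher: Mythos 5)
Your proof is correct, but it takes a genuinely different route from the paper. The paper disposes of Theorem \ref{thm6} in two lines: it majorizes $|T(f_1,f_2)|$ pointwise by the dyadic averages $T_M(|f_1|,|f_2|)$ and invokes Proposition \ref{prop3} (with $d=1$, $K=1$) to get per-block decay $M^{-1/2+\e}$, and Proposition \ref{prop3} in turn rests on Theorem \ref{thm3}, i.e.\ on the near-sharp $L^{6}$ bound for the quadratic exponential sum obtained through the level-set/circle-method machinery of Sections \ref{level}--4. You dualize and pass to the same Fourier-side representation with the multiplier $S(x_1,x_2)=\sum_m K(m)e^{2\pi i(mx_1+m^2x_2)}$, but then you bypass Theorem \ref{thm3} entirely: orthogonality of the distinct frequencies $(m,m^2)$ gives $\|S_j\|_{L^2(\mathbb T^2)}\lesssim 2^{-j/2}$, the triangle inequality gives $\|S_j\|_\infty\lesssim 1$, interpolation yields $\|S_j\|_{4}\lesssim 2^{-j/4}$, and you pair this with Lemma \ref{lemApp} at its endpoint $p=4/3$ via H\"older; the geometric series in $j$ closes the bound (and all interchanges are harmless since each dyadic block is a finite sum). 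What each approach buys: yours is more elementary, self-contained apart from Lemma \ref{lemApp}, and has no $\e$-loss, while the paper's route extracts the stronger decay $M^{-1/2+\e}$ per scale, which is more robust --- for instance it would still sum for kernels with $|K(m)|\le C|m|^{-\alpha}$ for any $\alpha>1/2$, whereas your $2^{-j/4}$ decay needs $\alpha>3/4$; for the stated kernel bound $|K(m)|\le C/|m|$ both arguments are perfectly adequate. Your closing remark that an $L^6$ bound on $S_j$ with decay $2^{-j/2+\e j}$ would also work is essentially the paper's Proposition \ref{prop3} specialized to $d=1$.
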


\begin{proof}
For any dyadic number $M\geq 1$, define $T_M(f_1, f_2)$ by 
\begin{equation}
T_M(f_1, f_2)(n) = \frac{1}{M}\sum_{m\sim M}|f_1(n-m)f_2(n-m^2)|\,. 
\end{equation}
Apply Proposition \ref{prop3} to get
\begin{equation}\label{TM1}
\|T_M(f_1, f_2)\|_{L^2(\mathbb Z)}\leq M^{-1/2+\e}\|f_1\|_{L^2(\mathbb Z)}\|f_2\|_{L^2(\mathbb Z)}\,.
\end{equation}
(\ref{Testf1}) follows from (\ref{TM1}). 
\end{proof}

\begin{remark}
If the kernel $K$ in Theorem \ref{thm6} has some cancellation condition, then $T(f_1, f_2)$ could be 
a bounded operator from $ L^2\times L^2$ to $L^1$. This problem is still open and seems to be  challenging.
\end{remark}

\vspace{0.1cm}

\section{Estimate for $K_{p,d,N}$ when $p$ is even}

In this section, we give a proposition on $K_{p,d, N}$ when $p$ is even. The idea is not new, and 
it is utilized often in the field of number theory. For the sake of self-containedness, we include 
it here. By using it and an arithmetic argument, one can get sharp estimates, up to a factor of $N^\e$, for $K_{6,1, N}$, $K_{4, 2, N}$, etc. See \cite{B1} for details.  

\begin{proposition}\label{prop1}
If $p>0$ is an even integer, then we have 
\begin{equation}\label{estKpeven}
 K^p_{p,d,N}\leq \sup_{(\bl, m)\in S_{d, pN/2}\times \{1, \cdots, pN^2/2\}
  } e^{2\pi \e m} \mathcal F_{\mathbb T^{d}\times \mathbb T}(F^{p/2}(\cdot, \cdot +i\e))(\bl, m) \,. 
\end{equation}
Here $\mathcal F_{\mathbb T^{d}\times \mathbb T}$ is Fourier transform of functions on $\mathbb T^{d} \times \mathbb T$, $\e$
is any positive number, 
and $F$ is  given by
\begin{equation}\label{defofFz}
 F(\bx, z)= \sum_{\bn\in \mathbb Z^d} e^{2\pi i z|\bn|^2 + 2\pi i \bx\cdot \bn}\,.
\end{equation}

\end{proposition}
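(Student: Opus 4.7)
The plan is to reduce $\|H\|_p^p$ for an extension $H$ of a sequence $\{a_\bn\}$ to a representation-count problem via Plancherel and Cauchy--Schwarz, and then to recognise the representation counts as Fourier coefficients of $F(\cdot,\cdot+i\e)^{p/2}$ for any $\e>0$.

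Fix a sequence $\{a_\bn\}_{\bn\in S_{d,N}}$ with $\sum_\bn|a_\bn|^2=1$ and set
\begin{equation*}
H(\bx,t)=\sum_{\bn\in S_{d,N}}a_\bn e^{2\pi i(\bn\cdot\bx+|\bn|^2 t)}.
\end{equation*}
Since $p$ is a positive even integer, $H^{p/2}$ is a trigonometric polynomial, and expanding the $(p/2)$-th power and grouping by $(\bl,m)=\bigl(\sum_j\bn_j,\ \sum_j|\bn_j|^2\bigr)$ gives
\begin{equation*}
\mathcal F_{\mathbb T^d\times\mathbb T}(H^{p/2})(\bl,m)=\sum_{\substack{(\bn_1,\dots,\bn_{p/2})\in S_{d,N}^{p/2}\\ \bn_1+\cdots+\bn_{p/2}=\bl\\ |\bn_1|^2+\cdots+|\bn_{p/2}|^2=m}}a_{\bn_1}\cdots a_{\bn_{p/2}},
\end{equation*}
which vanishes unless $(\bl,m)$ lies in the finite support range stated in the proposition. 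Applying Cauchy--Schwarz to this inner sum (paired against the constant $1$ over the representation set), then Plancherel, and finally interchanging the order of summation yields
\begin{equation*}
\|H\|_p^p=\|H^{p/2}\|_2^2\leq\Bigl(\sup_{\bl,m}r(\bl,m)\Bigr)\Bigl(\sum_\bn|a_\bn|^2\Bigr)^{p/2}=\sup_{\bl,m}r(\bl,m),
\end{equation*}
where $r(\bl,m)$ is the total number of ways to represent $(\bl,m)$ as $\bigl(\sum_j\bn_j,\ \sum_j|\bn_j|^2\bigr)$ with $\bn_j\in\mathbb Z^d$.

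It remains to identify $r(\bl,m)$ as a Fourier coefficient of $F(\cdot,\cdot+i\e)^{p/2}$. The formal series defining $F(\bx,z)$ does not converge at real $z$, but at $z=t+i\e$ with $\e>0$ each summand acquires the damping factor $e^{-2\pi\e|\bn|^2}$, so the series converges absolutely and $F(\cdot,\cdot+i\e)$ is smooth on $\mathbb T^{d+1}$. Expanding the $(p/2)$-th power and grouping by $(\bl,m)$ as before produces
\begin{equation*}
F(\bx,t+i\e)^{p/2}=\sum_{\bl,m}r(\bl,m)\,e^{-2\pi\e m}\,e^{2\pi i(\bl\cdot\bx+mt)},
\end{equation*}
so that $r(\bl,m)=e^{2\pi\e m}\,\mathcal F_{\mathbb T^d\times\mathbb T}\bigl(F^{p/2}(\cdot,\cdot+i\e)\bigr)(\bl,m)$ for every $\e>0$. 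Substituting this into the previous bound on $\|H\|_p^p$ and taking the supremum over $\{a_\bn\}$ with $\sum|a_\bn|^2=1$ on the left produces $K_{p,d,N}^p$ and yields exactly \eqref{estKpeven}.

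There is essentially no real obstacle in this argument: the representation-count inequality via Cauchy--Schwarz is standard in the Vinogradov/Hardy--Littlewood framework, and the only delicate point is introducing the regularization parameter $\e$ to make $F$ convergent, whose effect is cleanly absorbed by the compensating factor $e^{2\pi\e m}$ on the right-hand side. The inequality is genuinely useful because arithmetic estimates on $r(\bl,m)$ (for instance divisor bounds for sums of squares) then give sharp bounds on $K_{p,d,N}^p$ for small even $p$.
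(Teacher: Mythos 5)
Your proposal is correct and takes essentially the same route as the paper's proof: Plancherel/orthogonality plus Cauchy--Schwarz reduces $\|H\|_p^p$ to the maximal representation count of $(\bl,m)$, which is then enlarged from $S_{d,N}$ to $\mathbb Z^d$ and identified with $e^{2\pi\e m}\,\mathcal F_{\mathbb T^{d}\times\mathbb T}\bigl(F^{p/2}(\cdot,\cdot+i\e)\bigr)(\bl,m)$. The only cosmetic difference is that you obtain this identification by expanding the absolutely convergent series for $F(\cdot,\cdot+i\e)^{p/2}$ directly, while the paper inserts the orthogonality integrals and the damping factor $e^{2\pi\e m}e^{2\pi i(t+i\e)\sum_j|\bn_j|^2}$ by hand before recognizing the same Fourier coefficient.
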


\begin{proof}
Let $k=p/2$. A direct calculation yields 
\begin{equation}
 \int_{\mathbb T^{d+1}} \left| \sum_{\bn\in S_{d,N}} a_{\bn}e^{2\pi i (\bn\cdot \bx+ |\bn|^2t)}   \right|^{2k}  d\bx dt
 = \sum_{(\bn_1, \cdots, \bn_k, \bm_1, \cdots, \bm_k)\in S_{d,N,k}}a_{\bn_1}\cdots a_{\bn_k}\overline{a_{\bm_1}}
\cdots \overline{a_{\bm_k}} \label{even15}
 \,.
\end{equation}
Here $S_{d,N,k}$ is given by
$$
S_{d,N,k}=\left\{(\bn_1, \cdots, \bn_k, \bm_1, \cdots, \bm_k)\in S_{d,N}^{p}: \sum_{j=1}^k\bn_j=\sum_{j=1}^k\bm_j\,\,, 
 \sum_{j=1}^k|\bn_j|^2=\sum_{j=1}^k |\bm_j |^2\right\}
$$
For any $\bl\in S_{d, kN}$ and any positive integer $m\leq kN^2 $, we set
$$
S_k(\bl, m)=\left\{(\bn_1, \cdots, \bn_k)\in S_{d,N}^{k}: \sum_{j=1}^k\bn_j=\bl, \,\,\sum_{j=1}^k|\bn_j|^2=m \right\}\,.
$$
  We now can estimate 
(\ref{even15}) by
\begin{equation}\label{16}
 \sum_{\bl\in S_{d, kN}}\sum_{m=1}^{kN^2} \left|\sum_{(\bn_1, \cdots, \bn_k)\in S_k(\bl,m)}a_{\bn_1}\cdots a_{\bn_k} \right|^2  
\end{equation}
Utilizing Cauchy-Schwarz inequality and the fact that $\{S_k(\bl, m)\}$ forms a partition of $S_{d,N}^k$, we 
dominate (\ref{16}) by 
\begin{equation}
 \max_{\bl\in S_{d, kN}, 1\leq m\leq kN^2}\left|S_{k}(\bl, m)\right| \left( \sum_{\bn}|a_\bn|^2\right)^k\,,
\end{equation}
where $\left|S_{k}(\bl, m)\right|$ denotes the cardinality of $S_{k}(\bl, m)$.


Employing the elementary fact $\int_0^1 e^{2\pi i n\theta} d\theta= 0$ if $n\neq 0$ and
$\int_0^1 e^{2\pi i n\theta} d\theta= 1$ if $n=0$, for any $\bl\in S_{d, kN}$ and any positive integer
$m\leq kN^2$, we can estimate $|S_k(\bl,m)|$ by
\begin{equation}\label{19}
\sum_{(\bn_1, \cdots, \bn_k)\in S_{d, N}^k  }
 \int_0^1 e^{2\pi i t(\sum_{j=1}^k |\bn_j|^2-m )} dt  \int_{\mathbb T^{d}} e^{2\pi i\sum_{j=1}^k
  \bx\cdot \bn_j} e^{-2\pi i \bx\cdot \bl }d\bx\,,
\end{equation}
which equals to
\begin{equation}\label{110}
\sum_{(\bn_1, \cdots, \bn_k)\in S_{d, N}^k  }
  e^{2\pi \e m }\int_0^1 e^{2\pi i (t+i\e)\sum_{j=1}^k |\bn_j|^2}  e^{-2\pi im t}dt  \int_{\mathbb T^{d}} e^{2\pi i\sum_{j=1}^k
  \bx\cdot \bn_j} e^{-2\pi i \bx\cdot \bl }d\bx\,,
\end{equation}
for any real number $\e$. This term can also be written as
\begin{equation}\label{111}
e^{2\pi \e m }\int_{\mathbb T^{d}\times \mathbb T}
\left(\sum_{\bn\in S_{d, N}} e^{2\pi i (t+i\e)|\bn|^2} e^{2\pi i \bx\cdot \bn }  \right)^k e^{-2\pi i \bx\cdot \bl}
e^{-2\pi i m t} d\bx dt\,.
\end{equation}
Notice that we may replace $S_{d, N}$ by $\mathbb Z^d$ in (\ref{19}), (\ref{110}) and (\ref{111}) 
to make upper bounds larger. Thus,
by the definition of $F$ in (\ref{defofFz}), we dominate $|S_k(\bl,m)|$ by
\begin{equation}
  e^{2\pi \e m} \int _{\mathbb T^{d}\times \mathbb T}\left( F( \bx, t+i\e)\right)^k e^{-2\pi i \bx\cdot \bl}
e^{-2\pi i m t} d\bx dt\,.
\end{equation}
This finishes the proof of Proposition \ref{prop1}.

\end{proof}

{\bf Acknowledgement}. The first author wishes to thank his
advisor Xiaochun Li, for his valuable and insightful suggestions and enthusiastic guidance.

\vspace{0.6cm}


\begin{thebibliography}{4}

\bibitem{B1} J.Bourgain,  {\it Fourier transform restriction phenomena for certain lattice subsets and applications to nonlinear evolution equations. Part I: Schr\"odinger equations},  GAFA, Vol. 3, No. 2, 1993, 107-156.

\bibitem{B2} J.Bourgain, {\it Fourier transform restriction phenomena for certain lattice subsets and applications to nonlinear evolution equations. Part II: The KDV-equations},  GAFA, Vol. 3, No. 3, 1993, 209-262.

\bibitem{B3} J. Bourgain, {\it Pointwise ergodic theorems for arithmetic sets}, Inst. Hautes Etudes Sci. Publ. Math. 69
 (1989), 5-45. 

\bibitem{B4} J. Bourgain, {\it Moment inequalities for trigonometric polynomials with spectrum in curved hypersurfaces},
 arXiv: 1107.1129v1. 

\bibitem{Hua} L. K. Hua, {\it Additive theory of prime numbers}, translations of math. monographs, Vol. 13, 
AMS, 1965. 

\bibitem{W} A. Ionescu and S. Wainger, {\it $L^p$ boundedness of discrete singular radon transforms}, JAMS, Vol. 19, 
 No. 2 (2007), 357-383.

\bibitem{MSW} A. Magyar, E. Stein, and S. Wainger,  {\it  Discrete analogues in harmonic analysis: spherical averages}.  Ann. of Math. (2)  155  (2002),  no. 1, 189-208.


\bibitem{M} H. L. Montgomery,  {\it Ten lectures on the interface between analytic number theory and harmonic analysis}, CBMS, No. 84, AMS, 1994.

\bibitem{Vino} I. M. Vinogradov, {\it The method of trigonometrical sums in the theory of numbers}, 
Intersci. Publishers, ING., New York, 1954.




\end{thebibliography}
\end{document}